\documentclass[a4paper,11pt]{amsart}
\usepackage{amssymb}
\usepackage{mathrsfs}
\usepackage{amsmath,amssymb,amsthm,latexsym,amscd,mathrsfs}
\usepackage{indentfirst}
\usepackage{stmaryrd}
\usepackage{graphicx}
\usepackage{extarrows}
\usepackage{romanbar}
\usepackage{multirow}
\usepackage{latexsym}
\usepackage{amsfonts}
\usepackage{color}
\usepackage{pictexwd,dcpic}
\usepackage{graphicx}
\usepackage{psfrag}
\usepackage{hyperref}
\usepackage{comment}
\usepackage{enumerate}
\usepackage{cite}%new
\allowdisplaybreaks

\numberwithin{equation}{section} \theoremstyle{plain}
\newtheorem{thm}{Theorem}[section]

\newtheorem{prop}[thm]{Proposition}
\newtheorem{asser}[thm]{Assertion}
\newtheorem{lem}[thm]{Lemma}

\newtheorem{rem}[thm]{Remark}

\newtheorem{ack}{Acknowledgements}   
 \makeatletter

\def\<{\langle}
\def\>{\rangle}
\def\({\left(}
\def\){\right)}
\def\[{\left[}
\def\]{\right]}
\def\tr{\mathop{\text{tr}}}
\renewcommand\subsection{\@startsection{subsection}{2}{\z@}%
  {2.0ex plus 0.8ex minus 0.2ex}% beforeskip
  {1.0ex plus 0.2ex}% afterskip (positive => text starts on next line)
  {\normalfont\bfseries}}

\makeatother
\title{Rigidity of closed minimal hypersurfaces in $\mathbb{S}^5$}

\author[J. Q. Ge]{Jianquan Ge}
\address{School of Mathematical Sciences, Laboratory of Mathematics and Complex Systems, Beijing Normal University, Beijing 100875, P.R. CHINA.}
\email{jqge@bnu.edu.cn}

\author[T. Liu]{Tong Liu$^{\dag}$}
\address{School of Mathematical Sciences, Laboratory of Mathematics and Complex Systems, Beijing Normal University, Beijing 100875, P.R. CHINA.}
\email{tt@mail.bnu.edu.cn}

\author[K. Y. Luo]{Keyan Luo}
\address{School of Mathematical Sciences, Laboratory of Mathematics and Complex Systems, Beijing Normal University, Beijing 100875, P.R. CHINA.}
\email{kyluo@mail.bnu.edu.cn}

\author[W. J. Yan]{Wenjiao Yan}
\address{School of Mathematical Sciences, Laboratory of Mathematics and Complex Systems, Beijing Normal University, Beijing 100875, P.R. CHINA.}
\email{wjyan@bnu.edu.cn}
\makeatletter
\@namedef{subjclassname@2020}{\textup{2020} Mathematics Subject Classification}
\makeatother

\subjclass[2020]{53C40, 53C42, 57R40.}
\date{}
\keywords{isoparametric hypersurface, Chern conjecture, scalar curvature, Gauss-Kronecker curvature.}
\thanks {$^{\dag}$ the corresponding author}
\thanks{The project is partially supported by the NSFC (No. 12271038, 12571049, 12526205) and the Fundamental Research Funds for the Central Universities.}
\begin{document}

\begin{abstract}

	The celebrated Chern conjecture asserts that any closed minimal hypersurface in $\mathbb{S}^{n+1}$ with constant scalar curvature is isoparametric. In this paper, we resolve this conjecture in the affirmative for $M^4 \subset \mathbb S^5$ under the assumption that the Gauss-Kronecker curvature $K$ is constant.

This result breaks the traditional reliance on consecutive trace conditions, demonstrating that the nonconsecutive spectral invariant set $\{H, S, K\}$ is sufficient to yield complete geometric rigidity. To overcome the analytical singular locus, we construct two novel weighted $3$-forms adapted to $S$ and $K$. Crucially, the global curvature estimates required to close our analysis are obtained unconditionally by proving the Euler characteristic $\chi(M)=0$. This local-to-global approach provides a new paradigm for higher-dimensional rigidity problems.

%We prove that if $M$ is a closed minimal hypersurface immersed in $\mathbb{S}^5$ with constant squared norm of second fundamental form $S$ and constant Gauss-Kronecker curvature $K$, then $M$ is isoparametric.
\end{abstract}

\maketitle

% \tableofcontents

\section{Introduction}

The Chern conjecture is one of the most celebrated and long-standing rigidity problems in the geometry of submanifolds, specifically concerning minimal hypersurfaces in spheres. Let $M^n$ be a closed minimal hypersurface immersed in the unit sphere $\mathbb S^{n+1}$, and let $S$ denote the squared norm of its second fundamental form. The classical Simons' identity~\cite{sim68} yields the fundamental integral inequality
$$
\int_M S(S-n)\,\mathrm{vol}\ge0.
$$
Consequently, if $S\le n$, then either $S\equiv0$ (a totally geodesic sphere) or $S\equiv n$. The critical threshold $S\equiv n$ was characterized independently by Chern, do Carmo, Kobayashi \cite{cdk70}, and Lawson \cite{law69}, who showed that $M$ must be a Clifford torus. Let $R_{M}$ denote the scalar curvature of $M$. By the Gauss equation, the minimality of $M$ implies that $S=n(n-1)-R_{M}$. %Thus, the constancy of $S$ is equivalent to the constancy of $R_M$. 
Then these pioneering results led to the original Chern Conjecture that the set of values of constant $R_M$ on closed minimal hypersurfaces $M$ is discrete.

The first major breakthrough was achieved by Peng and Terng \cite{pt83}, who proved that if $S>n$, then $S \ge n+\frac{1}{12n}$. This work initiated the systematic study of the higher pinching problem (see recent progress on the second gap in \cite{gtyz25} and references therein). Despite substantial subsequent progress over the past several decades, all currently known closed minimal hypersurfaces with constant $S$ are isoparametric---those with constant principal curvatures. The classification of isoparametric hypersurfaces in unit spheres was initiated by Élie Cartan in the 1930s, and the final case was settled by Quo-Shin Chi \cite{chi20} in 2020. For the classifications and developments of isoparametric hypersurfaces, see the excellent monograph of Cecil and Ryan \cite{cr15} and recent survey \cite{gqty25}. This empirical evidence supports the following stronger, and now standard, formulation of the conjecture:

\vspace{2mm}

\noindent
\textbf{Chern's Conjecture (strong version)}. \emph{Let $M^n$ be a closed minimal hypersurface immersed in $\mathbb S^{n+1}$. If $S$ is constant, then $M$ is isoparametric.}
\vspace{2mm}

%\begin{conj}\label{conj:Chern}
%	Let $M^n$ be a closed minimal hypersurface immersed in $\mathbb S^{n+1}$. If $S$ is constant, then $M$ is isoparametric.
%\end{conj}

To date, the Chern conjecture has been completely resolved only in dimension $n=3$, a landmark achievement due to Chang \cite{changjdg93}. More generally, in dimension $n=3$, de Almeida and
Brito~\cite{ab90}
introduced an elegant differential-form method and proved the conjecture under the
additional assumption $R_M\ge0$; Chang subsequently removed this sign
restriction~\cite{chang93} to establish the following definitive result.
\vspace{2mm}

\noindent
\textbf{Theorem} (de Almeida-Brito-Chang). \emph{Let $M^3$ be a closed hypersurface immersed in $\mathbb S^4$ with constant mean curvature and constant scalar curvature. Then $M$ is isoparametric.}
%\vspace{2mm}

%\begin{thm}[de Almeida-Brito-Chang]\label{thm:ABC}
%	Let $M^3$ be a closed hypersurface in $\mathbb S^4$ with constant mean curvature and constant scalar curvature. Then $M$ is isoparametric.
%\end{thm}

The differential-form framework was generalized partially to low dimensions (see for example \cite{lss05,svw12,dgw17}) and partially to arbitrary dimension \cite{twy20}, culminating in the work of Tang and Yan \cite{ty23}, who  completely generalized it to arbitrary dimensions. Let $\lambda_1,\ldots,\lambda_n$ denote the principal curvatures of $M$, and define the power sums (or traces) of the shape operator $A$ by
$$
P_r=\sum_{i=1}^n\lambda_i^r=\operatorname{tr}A^r.
$$
The Tang-Yan rigidity theorem is formulated as follows.

\vspace{2mm}

\noindent
\textbf{Theorem} (Tang-Yan). \emph{Let $M^n$ $(n>3)$ be a closed hypersurface immersed in $\mathbb S^{n+1}$. If $R_M\ge0$ and the power sums $P_1, P_2, \ldots, P_{n-1}$ are constant, then $M$ is isoparametric. Moreover, if $M^n$ has $n$ distinct principal curvatures somewhere, then $R_M\equiv0$.}
\vspace{2mm}

%\begin{thm}[Tang-Yan]\label{thm:Tang-Yan}
%	Let $M^n$ $(n>3)$ be a closed hypersurface immersed in $\mathbb S^{n+1}$. If $R_M\ge0$ and the power sums $P_1, P_2, \ldots, P_{n-1}$ are constant, then $M$ is isoparametric.
%\end{thm}

This theorem suggests a broader set of questions. Is the nonnegativity of the scalar curvature essential, or can it be removed in arbitrary dimensions?
Must the prescribed power sums be consecutive, or can they be replaced by
another suitably chosen collection? More fundamentally, can the number of
prescribed power sums be reduced?

%We note that the assumption $R_M \geq 0$ has been successfully removed in only a few select scenarios: namely, in dimension $3$ by Chang \cite{chang93}, and in dimension $4$ by Deng, Gu, and Wei \cite{dgw17} under the additional structural assumption that $M^4$ is a Willmore hypersurface. 

For the first open dimension, $M^4\subset\mathbb S^5$, Cheng and Li~\cite{cl25} recently bypassed the sign condition $R_M \geq 0$ by introducing suitable weights into the differential form, provided that the number $g$ of distinct principal curvatures remains constant. Shortly thereafter, He, Xu, and Zhao~\cite{hxz26} removed the restriction on $g$, obtaining the following result.

\vspace{2mm}

\noindent
\textbf{Theorem} (He-Xu-Zhao). \emph{	Let $M^4$ be a closed minimal hypersurface immersed in $\mathbb S^5$. If
	$S$ and $P_3$ are constant, then $M$ is isoparametric.}
\vspace{2mm}

%\begin{thm}[He-Xu-Zhao]\label{thm:HXZ}
%	Let $M^4$ be a closed minimal hypersurface immersed in $\mathbb S^5$. If
%	$S$ and $P_3$ are constant, then $M$ is isoparametric.
%\end{thm}

The Chern conjecture predicts that, the two baseline conditions
``$
P_1=H=0, P_2=S=\mathrm{constant}
$"
should already force the entire principal-curvature structure to be rigid.
From this viewpoint, additional higher-order trace conditions should be
regarded as intermediate stepping stones toward the conjecture, rather than as intrinsic
features of the expected rigidity.
While the theorem above marks a major advance, it leaves open the question of whether the consecutive trace assumption $\{P_1, P_2, P_3\}$ is unique in yielding rigidity. To address this, we investigate the next higher-order symmetric polynomial, $P_4$. For a minimal $4$-dimensional hypersurface, Newton's identities yield the algebraic relation
$$
P_4=\frac12S^2-4K,
$$
where $K=\det A=\lambda_1\lambda_2\lambda_3\lambda_4$ is the Gauss-Kronecker curvature. Consequently, when $S$ is constant, the constancy of $P_4$ is precisely equivalent to the constancy of $K$. Geometrically, the Gauss-Kronecker curvature $K$ is a highly natural object: because the dimension is even, $K$ is invariant under a change of orientation of the normal bundle, making it globally well-defined even if the hypersurface $M$ is non-orientable.

In this paper, we resolve this nonconsecutive rigidity problem in the affirmative, establishing the following main result.

\begin{thm}\label{thm:main}
	Let $M^4$ be a closed minimal hypersurface immersed in $\mathbb S^5$. If the squared norm of the second fundamental form $S$ and the Gauss-Kronecker curvature $K$ are constant, then $M$ is isoparametric.  More precisely, 
    \begin{itemize}
        \item[(1)] If $K<0$, then $M$ is the Clifford torus $\mathbb{S}^{1}\left(\frac{1}{2}\right)\times\mathbb{S}^{3}\left(\frac{\sqrt{3}}{2}\right)$;
        \item[(2)] If $K=0$, then $M$ is the totally geodesic sphere $\mathbb{S}^{4}(1)$;
        \item[(3)] If $K>0$, let $\Omega=\{p\in M|g(p)=4\}\subset M^4$. 
        \begin{itemize}
            \item[(a)] If $\Omega=\emptyset$, then $M$ is the Clifford torus $\mathbb{S}^{2}\left(\frac{\sqrt{2}}{2}\right)\times\mathbb{S}^{2}\left(\frac{\sqrt{2}}{2}\right)$;
            \item[(b)] If $\Omega\neq \emptyset$, then $M$ is the isoparametric hypersurface with four distinct principal curvatures, which has $R_{M}\equiv 0$.  
        \end{itemize}
    \end{itemize}
\end{thm}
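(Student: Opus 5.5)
The plan is to work with the four principal curvatures $\lambda_1,\dots,\lambda_4$ on the open dense set where the number $g$ of distinct principal curvatures is locally constant. There the constraints $P_1=0$, $P_2=S$, $P_4=\tfrac12S^2-4K$ leave a one-parameter family: $P_3$ is the only free symmetric function. The strategy is to show $P_3$ is constant, after which the He--Xu--Zhao theorem yields isoparametricity. The case list then follows by comparing with Cartan--Münzner's list of minimal isoparametric hypersurfaces in $\mathbb S^5$:
\begin{itemize}
\item $g=1$: the totally geodesic sphere, with $K=0$;
\item $g=2$: the Clifford tori $\mathbb S^1(\tfrac12)\times\mathbb S^3(\tfrac{\sqrt3}{2})$ with principal curvatures $(\sqrt3,-\tfrac1{\sqrt3},-\tfrac1{\sqrt3},-\tfrac1{\sqrt3})$, so $K<0$, and $\mathbb S^2\times\mathbb S^2$ with $K=1>0$;
\item $g=4$: Cartan's family with $K>0$ and $R_M=0$ ($S=12$).
\end{itemize}
Since $g=3$ does not occur for $n=4$, the sign of $K$ together with whether $\Omega$ is empty separates the cases.

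First, the degenerate signs. If $K=0$ then some $\lambda_i\equiv0$. Minimality and constant $S$ then reduce pointwise to a three-curvature problem. Using Codazzi and Simons' identity $\tfrac12\Delta S=|\nabla A|^2+S(4-S)$, together with constancy of $P_4$, I would aim to show $|\nabla A|^2$ is controlled. This should force $S(4-S)\ge0$ pointwise with equality, so $S\in\{0,4\}$; but $S=4$ is the Clifford torus case, which has $K\ne0$, hence $S=0$. For $K<0$, exactly one or three eigenvalues are negative. I would run an integral formula for $\Delta P_3$ (or for $\Delta$ of a suitable curvature function) to pin down $g=2$ with multiplicities $(1,3)$.

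The core case is $K>0$ with $\Omega\neq\emptyset$. On $\Omega$ the eigenvalues are smooth, and I would write the connection forms $\omega_{ij}$ with Codazzi relations $h_{ijk}$. Differentiating $P_1,P_2,P_4$ gives three linear equations in $(h_{11k},\dots,h_{44k})$ with Vandermonde-type coefficients $1,\lambda_i,\lambda_i^3$. Hence each $h_{iik}$ is determined up to one scalar function times explicit coefficients, exactly as in the de Almeida--Brito--Chang method. Following the abstract, I would build two weighted $3$-forms $\Theta_S,\Theta_K$ of the form $\sum_i f_i(\lambda)\,\omega_{i}\wedge\omega_{ij}\wedge\omega_{ik}$-type combinations, with weights chosen from $S$ and $K$ so that they extend smoothly across $\partial\Omega$, i.e.\ across the singular locus where eigenvalues collide. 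Their exterior derivatives should have the form $d\Theta=(\text{nonneg. quadratic in } h_{ijk} + c\,(R_M\text{-type term}))\,dV$. Integrating by Stokes gives $\int_M(\cdots)=0$.

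The remaining sign term would be handled via the Gauss--Bonnet--Chern integrand: in dimension $4$ the Pfaffian is a polynomial in $S$, $K$ and $|\nabla A|^2$-free curvature terms. Hence $\chi(M)$ is expressible through $\int_M K$ and $\int_M S^2$-type quantities. I would prove $\chi(M)=0$, e.g.\ by exhibiting a nowhere-vanishing vector field (the gradient structure of the eigenvalue foliation, or a principal direction field on the open dense set extended by continuity). This gives the global identity needed to make the boundary/curvature term nonpositive, forcing all $h_{ijk}=0$ off a null set, hence constant principal curvatures, $S=12$ and $R_M\equiv0$. If $\Omega=\emptyset$, then $g\le2$ everywhere and $K>0$ forces multiplicities $(2,2)$, giving $\mathbb S^2\times\mathbb S^2$.

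I expect the main obstacle to be the smooth extension of the weighted $3$-forms across the locus where $g$ drops, together with proving $\chi(M)=0$ unconditionally. For the latter I would try first to relate the Chern--Gauss--Bonnet integrand to $\int_M(S^2\text{ terms}-K)$ and use the integral identities already obtained from Simons' formula.
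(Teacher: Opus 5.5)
Your outline has the paper's architecture: weighted $3$-forms on $\Omega$, closed off by Gauss--Bonnet--Chern with $\chi(M)=0$. But the step that actually closes the argument, $\chi(M)=0$, does not follow from what you propose.

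\textbf{The proof of $\chi(M)=0$.} Neither of your candidate vector fields works.
\begin{itemize}
\item Gradient-type fields: on $\Omega$ the eigenvalues are functions of $f=P_3$, and $\nabla f$ vanishes on the extremal sets $f^{-1}(\pm a)$ where $g=3$ (a posteriori, everywhere). So no such field is nowhere vanishing.
\item Principal line fields: they have no reason to extend by continuity. $L_1,L_2$ degenerate on $f^{-1}(a)$, where $\lambda_1=\lambda_2$, and $L_3,L_4$ degenerate on $f^{-1}(-a)$, where $\lambda_3=\lambda_4$. Near a codimension-two coincidence locus an eigenline typically turns by $\pi$ around a small linking loop.
\end{itemize}
A global line field exists only in the easy subcase where one of $\pm a$ is not attained; then $\lambda_1$ or $\lambda_4$ is simple on all of $M$.

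When both values are attained, the missing idea is the following.
\begin{itemize}
\item Split $TM=E_{12}\oplus E_{34}$ into the negative and positive eigenspaces of $A$. Each has rank $2$ because $K>0$ and $H=0$, and $e(TM)=e(E_{12})\wedge e(E_{34})$.
\item On $f^{-1}(-\infty,a/2)$ one has $E_{12}=L_1\oplus L_2$. So $\langle\nabla e_1,e_2\rangle$ is a globally defined connection form there, and the Euler form of $E_{12}$ is exact there. A cut-off then gives a representative of $e(E_{12})$ vanishing on that set.
\item Symmetrically, $e(E_{34})$ has a representative vanishing on $f^{-1}(-a/2,\infty)$.
\item These two open sets cover $M$, so the wedge of the representatives vanishes identically. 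Orientation double covers handle non-orientability.
\end{itemize}
Your fallback through Simons-type identities does not help. Simons only gives $|\nabla A|^2=S(S-4)$. The relation you actually need, $S=6K+6$, is exactly $\chi(M)=0$ read through $\int_M(3-\frac12S+3K)\,\mathrm{vol}=4\pi^2\chi(M)$.

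\textbf{The weighted forms.} The forms cannot be arranged to extend smoothly across $M\setminus\Omega$. They contain $\omega_{ij}=\sum_m h_{ijm}\omega_m/(\lambda_i-\lambda_j)$ with weights that do not vanish at collisions. The paper instead:
\begin{itemize}
\item uses the Tang--Yan cut-off $\eta_\epsilon\circ f$;
\item controls the boundary term $\int(\eta_\epsilon'\circ f)\,df\wedge\Psi$, because $df\wedge\Psi$ is a diagonal quadratic form in $df$ whose coefficients diverge only to $+\infty$ as $f\to a^-$ and only to $-\infty$ as $f\to -a^+$ (the favourable signs);
\item uses $\int_{M\setminus Y_\epsilon}|\Delta f|\,\mathrm{vol}\to0$.
\end{itemize}
You also leave open which weights to use and why two forms are needed. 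With $\alpha(x)=-K/x$ and $\alpha(x)=x^3$ one gets $d\Psi=(\sum_i c_ih_{44i}^2-C)\,\mathrm{vol}$ with $c_i<0$. Here $C=6K-\frac S2$, resp.\ $C=6K-\frac{S^2}4+\frac52S$. Only after substituting $S=6K+6$ do these become $3(K-1)$ and $-3(3K+2)(K-1)$, one of which is $\ge0$ for each $K>0$.

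The outcome is $h_{iik}=0$, i.e.\ $df=0$, not ``all $h_{ijk}=0$''. The mixed $h_{ijk}^2$ terms cancel identically in $d\Psi$. Indeed, on the $g=4$ example $|\nabla A|^2=S(S-4)=96\neq0$.

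\textbf{The remaining cases.} Your $K<0$ and $K=0$ paragraphs contain no working mechanism.
\begin{itemize}
\item For $K<0$, the paper combines $\Delta S=0$ and $\Delta P_4=0$ (Peng--Terng) into $4(4-S)K=-\frac12S|\nabla A|^2+2\mathscr A+\mathscr B$. It shows the right side is $\le0$ at any point with $g\in\{2,3\}$, or at a maximum of $f$ when $g\equiv4$, and concludes $S\le4$.
\item For $K=0$, your sketch presupposes $|\nabla A|^2=0$, which is the whole content. The paper cites Cui for this case.
\item Your step ``$\Omega=\emptyset\Rightarrow g\le2$'' needs the root analysis of $x^4-\frac S2x^2-\frac f3x+K$. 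That analysis shows $g=3$ occurs only at $f=\pm a$. So $\Omega=\emptyset$ with $a>0$ would force $f$ to be constant with $g\equiv3$, which is impossible for $n=4$.
\end{itemize}
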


\begin{rem}
	The $K=0$ case was resolved by Cui~\cite{cui25}. For the completeness, we list it here.
\end{rem}

Theorem~\ref{thm:main} demonstrates that consecutive trace conditions are not the only possible source of rigidity. In dimension $4$, the nonconsecutive set $\{P_1, P_2, P_4\}$ is sufficient to completely freeze the principal-curvature structure, without any sign assumption on $R_M$. The algebraic transparency of this rigidity can be seen via the characteristic polynomial of the shape operator $A$:
$$
\det(xI-A)=x^4-\frac{S}{2}x^2-\frac{f}{3}x+K,
$$
where $f=P_3$. Under our hypotheses, both $S$ and $K$ are constant, meaning that all possible variation of the principal curvatures is encoded in the
single function $f$. 
%The analytic core of the proof is to show that $f$ is constant, which immediately implies that the principal curvatures are constant, and thus $M^4$ is isoparametric.

To achieve this, we must overcome two formidable challenges, which constitute the core technical innovations of this work:

\begin{itemize}
	\item \textbf{Construction of Two Novel Weighted $3$-Forms:} 
	The differential forms controlling $df$ are defined only on the open set $\Omega \subset M$ where the four principal curvatures are pairwise distinct, and they exhibit singular behavior near the multiple-root locus. We overcome this by constructing \emph{two new weighted $3$-forms} adapted to $S$
	and $K$. By carefully balancing these two forms, we extract crucial sign-definite properties that allow us to carry out a Tang-Yan type cut-off argument.
	
	\item \textbf{Topological Resolution of Analytical Obstacles:} 
	The analytic estimates required to close our weighted-form inequalities need a global curvature relation that cannot be derived from local differential identities. Our key conceptual innovation is to obtain the necessary global curvature relation \emph{topologically} rather than imposing it as an assumption. Specifically, we prove the vanishing of the Euler characteristic $\chi(M)$ by analyzing the Euler classes of two rank-$2$ vector bundles associated with the principal distributions. The Gauss-Bonnet-Chern formula then translates this topological vanishing into exactly the integral curvature identity needed to close the weighted-form estimates.
\end{itemize}

This interplay between the local geometry of the principal curvatures and
the global topology of their principal distributions suggests a possible direction
beyond the $4$-dimensional problem. In attempts to weaken or rearrange the
trace assumptions in higher-dimensional rigidity theorems, the global topology of the principal distributions may act as a powerful compensator, providing geometric constraints that are invisible to local differential analysis alone.

The paper is organized as follows. In Section \ref{sec 2}, we establish our notational and frame-field conventions, and recall the fundamental structure equations and algebraic identities. Section \ref{sec 3} is devoted to the proof of Theorem \ref{thm:main}. Specifically, we treat the case $K<0$ in Subsection \ref{3.1}. %, while the case $K=0$ is resolved by invoking a result of Cui~\cite{cui25}. 
For the most delicate case $K>0$, we construct two novel weighted $3$-forms and derive their key algebraic properties in Subsection \ref{3.2}. In Subsection~\ref{3.3}, we perform a localized cut-off estimates, which, combined with Assertion \ref{asser} and the vanishing of the Euler characteristic which is guaranteed by Proposition \ref{prop euler-vanishing}, completes the proof of Theorem \ref{thm:main}.  Subsections \ref{3.4} and \ref{3.5} are dedicated to the proof of Proposition \ref{prop euler-vanishing} and Assertion \ref{asser}, respectively.

\addvspace{2\baselineskip} % extra 2 blank lines before this section

\section{Preliminaries and Structural Equations}\label{sec 2}

Throughout this paper, all manifolds under consideration are assumed to be connected. Let $M^{n}$ be an immersed hypersurface in the unit sphere $\mathbb{S}^{n+1}$. For any $p\in M$, we choose a local orthonormal frame field $\{e_{1}, \dots, e_{n}\}$ in a neighborhood of $p$, with $\{\omega_{1}, \dots, \omega_{n}\}$ denoting its dual coframe field. The structural equations of $M$ are:
\begin{equation}\label{streq1}
	\begin{aligned}
		d\omega_{i} &= \sum_{j=1}^n \omega_{ij} \wedge \omega_{j},\quad \omega_{ij}+\omega_{ji}=0, \\
		d\omega_{ij} &= \sum_{k=1}^n \omega_{ik} \wedge \omega_{kj} - R_{ij},
	\end{aligned}
\end{equation}
where $\omega = (\omega_{ij})$ represents the connection $1$-forms and $R = (R_{ij})$ represents the curvature $2$-forms. The curvature tensor $R_{ijkl}$ is defined locally via
\begin{equation}\label{eq curvature form Rij}
	R_{ij} = \frac{1}{2}\sum_{k,l=1}^n R_{ijkl}\, \omega_k \wedge \omega_l.
\end{equation}
In matrix notation, the second structural equation in \eqref{streq1} is compactly expressed as 
\begin{equation*}\label{eq 2nd str eq}
	d\omega = \omega \wedge \omega - R.
\end{equation*}

The second fundamental form of $M$ is a symmetric bilinear form given by
$$
h = \sum_{i,j=1}^n h_{ij}\, \omega_i \otimes \omega_j,
$$
whose associated shape operator is represented by the symmetric matrix $A = (h_{ij})$. Its covariant derivative, denoted by $h_{ijk}$, is defined by the relation
\begin{equation}\label{eq second derivative}
	\sum_{m=1}^n h_{ijm}\, \omega_m = dh_{ij} + \sum_{m=1}^n h_{mj}\, \omega_{mi} + \sum_{m=1}^n h_{im}\, \omega_{mj}.
\end{equation}
With these conventions, the Gauss and Codazzi equations are formulated as
\begin{align}
	R_{ijkl} &= \delta_{ik}\delta_{jl} - \delta_{il}\delta_{jk} + h_{ik}h_{jl} - h_{il}h_{jk}, \label{gauss} \\
	h_{ijk} &= h_{ikj}. \label{codazzi}
\end{align}

Let $\lambda_1(p) \le \lambda_2(p) \le \dots \le \lambda_n(p)$ denote the principal curvatures of $M$ (the eigenvalues of $A$ at $p$). % and let $g$ be the number of distinct principal curvatures. 
The mean curvature $H$, the squared norm of the second fundamental form $S$, and the Gauss-Kronecker curvature $K$ are defined respectively by
$$
H = \tr A = \sum_{i=1}^n \lambda_{i}, \qquad S = \tr A^2 = \sum_{i=1}^n \lambda_i^2, \qquad K = \det A = \prod_{i=1}^n \lambda_{i}.
$$
Thus, $M$ is minimal if and only if $H \equiv 0$. 

For each $k \ge 1$, the power sum function $P_k := \tr A^k$ is smooth on $M$. For convenience, we denote the third power sum by $f := P_{3} = \tr A^3$. If $M$ is a minimal hypersurface, Newton's identities yield the following algebraic relations between the power sums $P_k$ and the $k$-th mean curvatures (the $k$-th elementary symmetric polynomial of principal curvatures) $H_k = \sum_{i_{1}<\dots<i_{k}}\lambda_{i_1}\dots\lambda_{i_k}$:
\begin{equation}\label{eq condition}
	\begin{aligned}
		P_{1} &= \sum_{i=1}^n \lambda_{i} =H_{1}=H=0,\\
		P_{2} &= \sum_{i=1}^n \lambda_{i}^2 =H_{1}^2-2H_{2}=-2H_{2}=S,\\
		P_{3} &= \sum_{i=1}^n \lambda_{i}^3= H_{1}^3-3H_{1}H_{2}+3H_{3}=3H_{3}=f,\\
		P_{4} &= \sum_{i=1}^n \lambda_{i}^4 =H_{1}^4-4H_{1}^2H_{2}+4H_{1}H_{3}+2H_{2}^2-4H_{4}=\frac{1}{2}S^2 -4H_{4}.
	\end{aligned}
\end{equation}
When $n=4$, we have $K=H_{4}$. In particular, if $M^4$ is minimal with constant $S$,  the constancy of $K$ is equivalent to the constancy of $P_{4}$.

\section{Global Rigidity and Topological Resolution}\label{sec 3}

This section is devoted to the proof of Theorem \ref{thm:main}. %To guide the reader, we briefly outline the analytical and topological arguments. 
We first establish the rigidity result for the case $K < 0$ in Subsection~\ref{3.1}. The remainder of this section addresses the more delicate case where $K > 0$. The analytical frameworks developed in Subsections~\ref{3.2} and~\ref{3.3} are initially formulated under the assumption that $M$ is orientable. For non-orientable manifolds, we lift the geometric data to the orientation double cover $\widetilde{M}$ in Subsection~\ref{3.4}. Consequently, Theorem \ref{thm:main} holds unconditionally regarding orientability. 
%\end{rem}

%========================================================================

\subsection{Rigidity under $K<0$.}\label{3.1}

%In this subsection, we establish the rigidity of closed minimal hypersurfaces in $\mathbb{S}^5$ under the negative curvature assumption $K < 0$. 
Let $M^4 \subset \mathbb{S}^5$ be a closed minimal hypersurface with constant $S$ and constant Gauss-Kronecker curvature $K < 0$. By utilizing the classic Peng-Terng identities and exploiting the constancy of these fundamental geometric quantities, we derive severe restrictions on the multiplicities of the principal curvatures.

We begin by recalling several pointwise identities on $M$. By the covariant derivative formula \eqref{eq second derivative}, for any local orthonormal frame and indices $i,j \in \{1,2,3,4\}$, we have
\begin{equation}\label{eq ejhii}
	e_j(h_{ii}) = h_{iij} - 2\sum_{m\neq i}h_{mi}\,\omega_{mi}(e_j).
\end{equation}
Since $M$ is minimal (i.e., $H \equiv 0$), differentiating the mean curvature along any direction $e_k$ yields
\begin{equation}\label{eq ekH=0}
	e_k(H) = \sum_{i=1}^4 e_k(h_{ii}) = \sum_{i=1}^4 \left( h_{iik} - 2\sum_{m\neq i}h_{mi}\,\omega_{mi}(e_k) \right) = 0.
\end{equation}
Furthermore, the constancy of $S = \sum_{i,j}h_{ij}^2$ implies
\begin{equation*}
	\frac{1}{2}e_k(S) = \sum_{i=1}^4 h_{ii}e_k(h_{ii}) + \sum_{i\neq j}h_{ij}e_k(h_{ij}) = 0,
\end{equation*}
		which, in view of \eqref{eq ejhii}, simplifies to the relation
		\begin{equation}\label{eq ekS}
			\sum_{i=1}^4 h_{ii}h_{iik} - 2\sum_{i=1}^4 h_{ii}\sum_{m\neq i}h_{mi}\,\omega_{mi}(e_k) + \sum_{i\neq j}h_{ij}e_k(h_{ij}) = 0.
		\end{equation}
		Analogously, since the fourth power sum $P_4 = \tr A^4 = \sum_{i,j,m,l}h_{ij}h_{jm}h_{ml}h_{li}$ is constant, its directional derivative vanishes:
		\begin{equation*}
			\begin{aligned}
				e_k(P_{4}) = &\sum_{i,j,m,l}e_k(h_{ij})h_{jm}h_{ml}h_{li} + \sum_{i,j,m,l}h_{ij}e_k(h_{jm})h_{ml}h_{li} \\
				&+ \sum_{i,j,m,l}h_{ij}h_{jm}e_k(h_{ml})h_{li} + \sum_{i,j,m,l}h_{ij}h_{jm}h_{ml}e_k(h_{li}) = 0.
			\end{aligned}
		\end{equation*}
		Exploiting the symmetry of the indices, all four summands in the expression above are mutually equal. Consequently, we obtain the simplified constraint
		\begin{equation}\label{eq:31-P4-constant}
			\sum_{i,j,m,l}e_k(h_{ij})h_{jm}h_{ml}h_{li} = 0.
		\end{equation}

		Now, fix an arbitrary point $p \in M$ and diagonalize the shape operator $A$ at $p$ such that $h_{ij}(p) = \lambda_i(p)\delta_{ij}$. At this point, equation \eqref{eq ejhii} reduces to $e_j(h_{ii}) = h_{iij}$, and the system of constraints \eqref{eq ekH=0}--\eqref{eq:31-P4-constant} simplifies for each index $k$ to the following linear system:
		\begin{equation}\label{eq:31-linear-system}
			\begin{cases}
				h_{11k} + h_{22k} + h_{33k} + h_{44k} = 0, \\
				\lambda_1 h_{11k} + \lambda_2 h_{22k} + \lambda_3 h_{33k} + \lambda_4 h_{44k} = 0, \\
				\lambda_1^3 h_{11k} + \lambda_2^3 h_{22k} + \lambda_3^3 h_{33k} + \lambda_4^3 h_{44k} = 0.
			\end{cases}
		\end{equation}
		
		With these algebraic relations in hand, we are ready to prove the main rigidity result for the negative curvature case.
		
		\begin{prop}\label{prop:K-negative}
			Let $M^4$ be a closed minimal hypersurface immersed in $\mathbb{S}^5$ with constant $S$ and constant $K < 0$. Then $S = 4$, and $M$ is congruent to the Clifford torus
			$$
			\mathbb{S}^1\left(\frac{1}{2}\right) \times \mathbb{S}^3\left(\frac{\sqrt{3}}{2}\right) \subset \mathbb{S}^5.
			$$
		\end{prop}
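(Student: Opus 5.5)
Since $K<0$ and $H=0$, at every point exactly one or exactly three principal curvatures are negative. We will show that at every point the principal curvatures take only two distinct values, with multiplicities $1$ and $3$, and that these values are constant. The classical classification then applies: a minimal hypersurface with two distinct constant principal curvatures of multiplicities $1,3$ is a Clifford torus, and minimality forces $S=4$ and the radii $\frac12,\frac{\sqrt3}{2}$.

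\textbf{Step 1: the linear system forces $h_{iik}=0$ when the $\lambda_i$ are distinct.} Fix $p$ and suppose the four principal curvatures at $p$ are pairwise distinct. The coefficient matrix of \eqref{eq:31-linear-system} has rows $(1,\dots,1)$, $(\lambda_i)$, $(\lambda_i^3)$, a generalized Vandermonde matrix. A vector in its kernel is given by the $3\times 3$ minors, which are, up to sign, $\prod_{j<l,\ j,l\neq i}(\lambda_l-\lambda_j)\cdot\sum_{j\neq i}\lambda_j$. Since $\sum_{j\neq i}\lambda_j=-\lambda_i$, the kernel is spanned by $\bigl(\lambda_i\prod_{j\neq i}(\lambda_i-\lambda_j)^{-1}\bigr)_i$ up to a common factor. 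Thus $h_{iik}=c_k\,\lambda_i/\prod_{j\neq i}(\lambda_i-\lambda_j)$, and all four are nonzero when $K\neq0$.

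\textbf{Step 2: use the constancy of $S$ and $K$ globally.} This is the main obstacle, because Step 1 alone does not kill $h_{iik}$. I would examine $f=P_3$. The characteristic polynomial $x^4-\frac S2x^2-\frac f3x+K$ has four real roots. If $f$ is nonconstant on the open set $\Omega$ where $g=4$, consider a maximum point of $f$ on $M$. Write $q(x)=x^4-\frac S2x^2+K$, so the roots are the intersections of $q$ with the line $\frac f3x$. Since $K<0$, $q(0)<0$ and the quartic $q$ has exactly two real roots; hence $q(x)=\frac f3 x$ has exactly two real roots for any $f$. So four real roots are impossible when $K<0$, and $g\le 3$ everywhere, with at most two simple roots plus a repeated one. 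More precisely, the real roots of the quartic with multiplicity must total $4$. Two real roots of $q-\frac f3x$ at a crossing means the other two are complex unless there is tangency, i.e.\ a double root. Combining this with the sign constraint, the root configuration is one negative simple root, one positive simple root, and a double root, or a triple root. I would then show by the same discriminant analysis that a configuration with multiplicities $(1,1,2)$ still leaves $f$ determined with only finitely many options. By continuity of $f$ and connectedness, $f$ is locally constant where the multiplicity pattern persists, and hence constant; so the principal curvatures are constant.

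\textbf{Step 3: exclude $g=3$.} For an isoparametric hypersurface in $\mathbb S^5$, Münzner's theorem says that $g=3$ requires equal multiplicities, which is impossible since $4$ is not divisible by $3$. So $g=2$ with multiplicities $(1,3)$; the pattern $(2,2)$ would give $K>0$. By Cartan's classification, $M$ is $\mathbb S^1(r)\times\mathbb S^3(s)$, and minimality gives $r^2=\frac14$, hence $S=4$ and the Clifford torus $\mathbb{S}^1\left(\frac{1}{2}\right) \times \mathbb{S}^3\left(\frac{\sqrt{3}}{2}\right)$ stated in Proposition~\ref{prop:K-negative}.
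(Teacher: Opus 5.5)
Your Step 2 rests on a false algebraic claim. Adding the linear term $-\frac f3x$ to $q(x)=x^4-\frac S2x^2+K$ changes the number of real roots. So $q(x)=\frac f3x$ can have four distinct real solutions even when $K<0$. For example, the principal curvatures $(-6,1,2,3)$ have $H=0$, $K=-36$, $S=50$, $f=-180$, and $(x+6)(x-1)(x-2)(x-3)=x^4-25x^2+60x-36$. Hence no pointwise argument can exclude $g=4$, or force the multiplicity patterns you list, and everything downstream is unsupported. Your finitely-many-values argument for $f$ would work via the discriminant, which is a nonzero polynomial in $f$, but only if you had $g\le3$ everywhere, and you do not. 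The configuration with four distinct principal curvatures is exactly the hard case. Your Step 1 is correct; it is the paper's formula \eqref{eq hiij solution}. But it only shows that the $h_{iik}$ are all proportional to $e_k(f)$, which by itself gives no contradiction.

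The missing ingredient is an analytic identity. The paper uses the Peng--Terng formulas for $\Delta S$ and $\Delta P_4$. Since $S$ and $P_4=\frac12S^2-4K$ are constant, these give the pointwise identity $4(4-S)K=-\frac12S\sum h_{ijk}^2+2\mathscr A+\mathscr B$, whose left side is constant.

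\begin{itemize}
\item At a point with $g=2$ or $g=3$, the relations \eqref{eq:31-linear-system} turn the right side into a nonpositive quadratic form in the $h_{ijk}$.
\item If $g\equiv4$, one evaluates at a maximum of $f$. There $df=0$ together with \eqref{eq:31-linear-system} is a nonsingular Vandermonde system that kills all $h_{iik}$. The right side then becomes $-2(\lambda_1^2h_{234}^2+\lambda_2^2h_{134}^2+\lambda_3^2h_{124}^2+\lambda_4^2h_{123}^2)\le0$.
\end{itemize}

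Since $K<0$, this forces $S\le4$ in every case. Simons/Chern--do Carmo--Kobayashi/Lawson then gives the Clifford torus directly, the totally geodesic sphere being excluded by $K\ne0$. This rules out the $g=3$ and $g=4$ scenarios a posteriori rather than algebraically. Your idea of looking at a maximum of $f$ is the right one for the $g=4$ case, but it only becomes effective when combined with a second-order identity such as the one for $\Delta P_4$.
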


\begin{proof}
Let $g$ denote the number of distinct principal curvatures of $M^4$. We analyze the pointwise geometric behavior of $M$ based on the stratification of $g$. 			

If $g = 1$ at some point, the minimality of $M$ forces all principal curvatures to vanish identically, yielding $K = 0$. This contradicts the assumption $K < 0$, and thus we must have $g > 1$ globally on $M$.

Our proof relies on the classical Peng-Terng identities \cite{pt83}:
\begin{equation}\label{eq:31-PT}
	\Delta S = 2(4-S)S + 2\sum_{i,j,k}h_{ijk}^2, \qquad
	\Delta P_4 = 4(4-S)P_4 + 4(2\mathscr{A} + \mathscr{B}),
\end{equation}
where the terms $\mathscr{A}$ and $\mathscr{B}$ are defined by
$$
\mathscr{A} = \sum_{i,j,k}\lambda_i^2 h_{ijk}^2, \qquad \mathscr{B} = \sum_{i,j,k}\lambda_{i}\lambda_{j}h_{ijk}^2.
$$
Under the minimality condition $H = 0$, Newton's identities yield the relation
$$
P_4 = \frac{1}{2}S^2 - 4K.
$$
Since both $S$ and $K$ are constant, $S$ and $P_4$ are harmonic on $M$. Consequently, applying \eqref{eq:31-PT}, we obtain the key identity
\begin{equation}\label{eq:31-key-identity}
	4(4-S)K = -\frac{1}{2}S\sum_{i,j,k}h_{ijk}^2 + 2\mathscr{A} + \mathscr{B}.
\end{equation}
In what follows, we perform local calculations at an arbitrary point $p \in M$ using an orthonormal frame that diagonalizes the shape operator.

\medskip
\noindent\textbf{Case 1: $g(p) = 2$.} Since $K = \lambda_1\lambda_2\lambda_3\lambda_4 < 0$, we can, up to a change of orientation, arrange the principal curvatures as
$$
\lambda_1 = \lambda_2 = \lambda_3 = -\lambda, \qquad \lambda_4 = 3\lambda \qquad (\lambda > 0).
$$
Substituting these eigenvalues into the system \eqref{eq:31-linear-system} immediately yields $h_{44k} = 0$ for all $k$. Consequently, the components of the covariant derivative of the second fundamental form decompose as
\begin{align*}
	\sum_{i,j,k}h_{ijk}^2 &= \sum_{i,j,k\le3}h_{ijk}^2 + 3\sum_{i,j\le 3}h_{ij4}^2, \\
	\mathscr{A} &= \lambda^2 \sum_{i,j,k\le3} h_{ijk}^2 + 11\lambda^2 \sum_{i,j\le 3} h_{ij4}^2, \\
	\mathscr{B} &= \lambda^2 \sum_{i,j,k\le3} h_{ijk}^2 - 5\lambda^2 \sum_{i,j\le 3} h_{ij4}^2.
\end{align*}
Inserting these relations into \eqref{eq:31-key-identity} yields
$$
4(4-S)K = -\lambda^2 \left( 3\sum_{i,j,k\le3}h_{ijk}^2 + \sum_{i,j\le3}h_{ij4}^2 \right) \le 0.
$$
Since $K < 0$, this inequality implies $S \le 4$. By the classification theorems of Simons \cite{sim68}, Chern-do Carmo-Kobayashi \cite{cdk70}, and Lawson \cite{law69}, $M$ is either totally geodesic or a Clifford torus. The totally geodesic case is ruled out by the assumption $K < 0$. Thus, $M$ is congruent to the Clifford torus  $\mathbb{S}^1(\frac{1}{2}) \times \mathbb{S}^3(\frac{\sqrt{3}}{2})$, which satisfies $S = 4$.

			\medskip
			\noindent\textbf{Case 2: $g(p) = 3$.} Since $K < 0$, up to orientation, the principal curvatures can be arranged as $\lambda_{1}=\lambda_{2}<\lambda_{3}<0<\lambda_{4}$ or $\lambda_{1}<\lambda_{2}=\lambda_{3}<0<\lambda_{4}$. Actually, the calculation is independent of the order of the principal curvatures, so we treat the former case. Assume 
			$$	\lambda_1=\lambda_2=-\lambda,\qquad
			\lambda_3=-\mu,\qquad
			\lambda_4=2\lambda+\mu,
			\qquad (\lambda,\mu>0,\,\, \lambda\ne\mu).$$
			In this configuration, the system \eqref{eq:31-linear-system} reduces to
			\begin{equation}\label{eq:31-g3-derivatives}
				h_{11k} + h_{22k} = 0, \qquad h_{33k} = h_{44k} = 0.
			\end{equation}
			for each $k$. Applying \eqref{eq:31-g3-derivatives}, a direct calculation shows that
			\begin{equation*}
				\begin{aligned}
					\sum_{i,j,k}h_{ijk}^2 &= 4(h_{111}^2 + h_{112}^2) + 6(h_{113}^2 + h_{114}^2 + h_{123}^2 + h_{124}^2 + h_{134}^2 + h_{234}^2), \\
					\mathscr{A} &= 4\lambda^2(h_{111}^2 + h_{112}^2) + (4\lambda^2 + 2\mu^2)(h_{113}^2 + h_{123}^2) \\
					&\quad + (12\lambda^2 + 8\lambda\mu + 2\mu^2)(h_{114}^2 + h_{124}^2) + (10\lambda^2 + 8\lambda\mu + 4\mu^2)(h_{134}^2 + h_{234}^2), \\
					\mathscr{B} &= 4\lambda^2(h_{111}^2 + h_{112}^2) + (2\lambda^2 + 4\lambda\mu)(h_{113}^2 + h_{123}^2) - (6\lambda^2 + 4\lambda\mu)(h_{114}^2 + h_{124}^2) \\
					&\quad - (4\lambda^2 + 4\lambda\mu + 2\mu^2)(h_{134}^2 + h_{234}^2).     
				\end{aligned}
			\end{equation*}
		Substituting these expressions into the key identity \eqref{eq:31-key-identity}, we find
			\begin{equation*}
				\begin{aligned}
					4(4-S)K = &-(4\mu^2 + 8\lambda\mu)(h_{111}^2 + h_{112}^2) - (8\lambda^2 + 8\lambda\mu + 2\mu^2)(h_{113}^2 + h_{123}^2) \\
					&- 2\mu^2(h_{114}^2 + h_{124}^2) - 2\lambda^2(h_{134}^2 + h_{234}^2) \le 0.
				\end{aligned}
			\end{equation*}
As before, since $K < 0$, we obtain $S \le 4$, which implies that $M$ is either totally geodesic or a Clifford torus. However, neither of these homogeneous models admits a point with three distinct principal curvatures, which is a contradiction. Consequently, the case $g(p) = 3$ cannot occur.

\medskip
\noindent\textbf{Case 3: $g \equiv 4$ on $M$.} Since $M$ is compact, the smooth function $f = P_3 = \tr A^3$ achieves its maximum at some point $p \in M$. At this critical point, the condition $df = 0$ yields
$$
\lambda_1^2 h_{11k} + \lambda_2^2 h_{22k} + \lambda_3^2 h_{33k} + \lambda_4^2 h_{44k} = 0
$$
for each $k$. Together with \eqref{eq:31-linear-system}, this forms a Vandermonde-type system in terms of $\{h_{11k}, h_{22k}, h_{33k}, h_{44k}\}$. Since the four principal curvatures are pairwise distinct at $p$, the system has only the trivial solution:
$$
h_{11k} = h_{22k} = h_{33k} = h_{44k} = 0 \qquad \text{at } p.
$$

Consequently, only components $h_{ijk}$ with pairwise distinct indices can contribute to the identity \eqref{eq:31-key-identity} at $p$. Under this constraint, a direct algebraic computation reveals:
\begin{equation*}
		\begin{aligned}
					4(4-S)K &= \sum_{\substack{i,j,k \\ \text{pairwise distinct}}} \left( -\frac{1}{2}S + 2\lambda_i^2 + \lambda_i\lambda_j \right) h_{ijk}^2 \\
					&= \sum_{i<j<k} \left( -3S + 4\lambda_i^2 + 4\lambda_j^2 + 4\lambda_k^2 + 2\lambda_i\lambda_j + 2\lambda_i\lambda_k + 2\lambda_j\lambda_k \right) h_{ijk}^2 \\
					&= \sum_{i<j<k} \left( -3S + 4\lambda_i^2 + 4\lambda_j^2 + 4\lambda_k^2 + 2H_2 + 2\lambda_{10-i-j-k}^2 \right) h_{ijk}^2 \\
					&= -2\left( \lambda_1^2 h_{234}^2 + \lambda_2^2 h_{134}^2 + \lambda_3^2 h_{124}^2 + \lambda_4^2 h_{123}^2 \right) \le 0.
		\end{aligned}
\end{equation*}
Once again, the assumption $K < 0$ forces $S \le 4$, leading to the same contradiction since neither the sphere nor the Clifford tori allow $g \equiv 4$ globally.

\medskip
In conclusion, the only geometrically realizable scenario is $g \equiv 2$ with $S = 4$. Consequently, $M$ is uniquely determined as the Clifford torus $\mathbb{S}^1(1/2) \times \mathbb{S}^3(\sqrt{3}/2)$.

		\end{proof}

%-------------------------------------------------------------------------------

\subsection{Construction and Exterior Differentiation of Weighted $(n-1)$-forms}\label{3.2}

In their pioneering work, de Almeida and Brito \cite{ab90} introduced a delicate $2$-form to study the $3$-dimensional Chern conjecture, a construction later generalized by Tang and Yan \cite{ty23} to an $(n-1)$-form in arbitrary dimension $n$. Inspired by these developments, we incorporate carefully chosen weights into these differential forms, establishing two novel weighted $3$-forms designed specifically to resolve the positive curvature case $K > 0$.

Let $M^n$ be an oriented hypersurface immersed in $\mathbb{S}^{n+1}$. We denote by
$$
\Omega = \big\{ p \in M \mid \lambda_1(p) < \lambda_2(p) < \dots < \lambda_n(p) \big\}
$$
the open subset of $M$ where the principal curvatures are pairwise distinct. On $\Omega$, we choose a local oriented orthonormal coframe field $\{\omega_i\}_{i=1}^n$ dual to the principal directions, so that the volume form and the second fundamental form are given by
$$
\operatorname{vol} = \omega_1 \wedge \dots \wedge \omega_n, \qquad h = \sum_{i=1}^n \lambda_i\, \omega_i \otimes \omega_i.
$$
On this open set, the principal curvatures $\lambda_i$ vary smoothly on $\Omega$ and satisfy the classical differential relations:
\begin{equation}\label{eq:eigenvalue-derivative}
	e_i(\lambda_j) = h_{jji},
\end{equation}
and, for $i \neq j$, the connection $1$-forms are given by
\begin{equation}\label{eq:connection-simple-spectrum}
	\omega_{ij} = \sum_{m=1}^n \frac{h_{ijm}}{\lambda_i - \lambda_j}\, \omega_m.
\end{equation}

Now, let $U \subset \mathbb{R}$ be an open interval containing the image of all principal curvature functions $\lambda_i$ on the connected component of $\Omega$ under consideration. For any smooth function $\alpha \in C^\infty(U)$, we define the symmetric coefficients on $\Omega$ for $i \neq j$ by
$$
\alpha_{ij} := \frac{\alpha(\lambda_i) - \alpha(\lambda_j)}{\lambda_i - \lambda_j}.
$$
Using these coefficients, we define the weighted $(n-1)$-form $\Psi$ on $\Omega$ by
\begin{equation}\label{eq:Psi-definition}
	\Psi = \sum_{1 \le i < j \le n} (-1)^{i+j-1} \alpha_{ij}\, \omega_1 \wedge \dots \wedge \widehat{\omega}_i \wedge \dots \wedge \widehat{\omega}_j \wedge \dots \wedge \omega_n \wedge \omega_{ij}.
\end{equation}
This differential form is globally well-defined on $\Omega$, independent of the choice of the adapted principal coframe. Indeed, any transition between oriented adapted coframes must be of the form $\omega_i \mapsto \varepsilon_i \omega_i$ with $\varepsilon_i = \pm 1$ and $\prod_{i=1}^n \varepsilon_i = 1$. Under this gauge transformation, the connection forms transform as $\omega_{ij} \mapsto \varepsilon_i \varepsilon_j \omega_{ij}$ for $i \neq j$, leaving each summand in \eqref{eq:Psi-definition} invariant.

This general construction unifies and generalizes several classical differential forms in the literature. Specifically, setting the linear weight $\alpha(x) = x$ recovers the $(n-1)$-form introduced by Tang and Yan \cite{ty23}, whereas the quadratic weight $\alpha(x) = x^2$ yields the $(n-1)$-form subsequently exploited in \cite{cl25} and \cite{hxz26}.

\begin{rem}
	The local smoothness of $\alpha$ on $U$ is crucial. In particular, our subsequent choice $\alpha(x) = -Kx^{-1}$ is mathematically rigorous only on the region where all  the principal curvatures are non-vanishing. This condition is naturally guaranteed under the assumption $K > 0$.
\end{rem}

We now compute the exterior derivative $d\Psi$. For simplicity, we denote the basis $(n-2)$-forms by
$$
\eta_{ij} := \omega_1 \wedge \dots \wedge \widehat{\omega}_i \wedge \dots \wedge \widehat{\omega}_j \wedge \dots \wedge \omega_n \qquad (i < j).
$$
This allows us to decompose the exterior derivative as $d\Psi = I + II + III$, where
\begin{align*}
	I &= \sum_{i<j} (-1)^{i+j-1} \alpha_{ij}\, d\eta_{ij} \wedge \omega_{ij}, \\
	II &= \sum_{i<j} (-1)^{i+j-1+n-2} \alpha_{ij}\, \eta_{ij} \wedge d\omega_{ij}, \\
	III &= \sum_{i<j} (-1)^{i+j-1} d\alpha_{ij} \wedge \eta_{ij} \wedge \omega_{ij}.
\end{align*}
Utilizing the structural equations \eqref{streq1}, the Gauss-Codazzi relations \eqref{gauss}--\eqref{codazzi}, and the connection formula \eqref{eq:connection-simple-spectrum}, the first term $I$ expands to
$$
I = \sum_{i<j} (-1)^n \alpha_{ij} \sum_{k \neq i,j} \left[ \frac{h_{kki}h_{jji}}{(\lambda_k - \lambda_i)(\lambda_j - \lambda_i)} + \frac{h_{kkj}h_{iij}}{(\lambda_k - \lambda_j)(\lambda_i - \lambda_j)} - \frac{h_{ijk}^2}{(\lambda_i - \lambda_k)(\lambda_k - \lambda_j)} \right] \operatorname{vol}.
$$
Here, the final term in the bracket is obtained by combining the two components
$$
\frac{h_{ijk}^2}{(\lambda_k - \lambda_i)(\lambda_i - \lambda_j)} - \frac{h_{ijk}^2}{(\lambda_k - \lambda_j)(\lambda_i - \lambda_j)} = -\frac{h_{ijk}^2}{(\lambda_i - \lambda_k)(\lambda_k - \lambda_j)}.
$$
Crucially, the contribution of the terms involving $h_{ijk}^2$ vanishes upon summation over all indices. Indeed, for any triple of distinct indices $i < j < k$, the definition of $\alpha_{pq}$ yields the algebraic identity
$$
\frac{\alpha_{ij}}{(\lambda_i - \lambda_k)(\lambda_k - \lambda_j)} + \frac{\alpha_{ik}}{(\lambda_i - \lambda_j)(\lambda_j - \lambda_k)} + \frac{\alpha_{jk}}{(\lambda_j - \lambda_i)(\lambda_i - \lambda_k)} = 0.
$$
Thus, the term $I$ simplifies to
$$
I = \sum_{i<j} (-1)^n \alpha_{ij} \sum_{k \neq i,j} \left[ \frac{h_{kki}h_{jji}}{(\lambda_k - \lambda_i)(\lambda_j - \lambda_i)} + \frac{h_{kkj}h_{iij}}{(\lambda_k - \lambda_j)(\lambda_i - \lambda_j)} \right] \operatorname{vol}.
$$

Similarly, using the Gauss curvature relation $R_{ijij} = 1 + \lambda_i \lambda_j$ for hypersurfaces in the unit sphere, the second term $II$ is written as
\begin{equation*}
	II = \sum_{i<j} (-1)^n \alpha_{ij} \left[ \sum_{k \neq i,j} \left( \frac{h_{iik}h_{jjk}}{(\lambda_i - \lambda_k)(\lambda_k - \lambda_j)} - \frac{h_{ijk}^2}{(\lambda_i - \lambda_k)(\lambda_k - \lambda_j)} \right) - R_{ijij} \right] \operatorname{vol}.
\end{equation*}
The $h_{ijk}^2$ terms in $II$ vanish by the same cyclic algebraic identity, yielding
\begin{equation*}
	II = \sum_{i<j} (-1)^n \alpha_{ij} \left[ \sum_{k \neq i,j} \frac{h_{iik}h_{jjk}}{(\lambda_i - \lambda_k)(\lambda_k - \lambda_j)} - R_{ijij} \right] \operatorname{vol}.
\end{equation*}
Finally, the contribution from the derivatives of the coefficients is given by
\begin{equation*}
	III = \sum_{i<j} (-1)^n \frac{(\alpha_{ij})_i h_{jji} - (\alpha_{ij})_j h_{iij}}{\lambda_i - \lambda_j} \operatorname{vol}.
\end{equation*}
Summing these three contributions, we arrive at the general formula for the exterior derivative of the weighted $(n-1)$-form:
\begin{equation}\label{eq:dPsi-general}
	\begin{aligned}
		d\Psi = \sum_{1 \le i < j \le n} (-1)^n \Bigg\{ &\alpha_{ij} \sum_{k \neq i,j} \left[ \frac{h_{kki} h_{jji}}{(\lambda_k - \lambda_i)(\lambda_j - \lambda_i)} + \frac{h_{kkj} h_{iij}}{(\lambda_k - \lambda_j)(\lambda_i - \lambda_j)} + \frac{h_{iik} h_{jjk}}{(\lambda_i - \lambda_k)(\lambda_k - \lambda_j)} \right] \\
		&+ \frac{1}{\lambda_i - \lambda_j} \Big( (\alpha_{ij})_i h_{jji} - (\alpha_{ij})_j h_{iij} \Big) - \alpha_{ij} R_{ijij} \Bigg\} \operatorname{vol}.
	\end{aligned}
\end{equation}

\medskip
We now specialize this construction to the case $n=4$. Let $M^4$ be an oriented, closed minimal hypersurface in $\mathbb{S}^5$ with constant $S$ and constant positive Gauss-Kronecker curvature $K > 0$. The condition $K = \lambda_1\lambda_2\lambda_3\lambda_4 > 0$ guarantees that the principal curvatures are nowhere zero on $M$. Consequently, the two choices of auxiliary functions
$$
\alpha(x) = -Kx^{-1} \quad \text{and} \quad \alpha(x) = x^3
$$
are smooth and globally well-defined on $\Omega$. These choices yield two key weighted $3$-forms, whose indices correspond directly to the algebraic degree of the generating functions $\alpha$:
\begin{equation}\label{eq psi1}
	\Psi_{-1} = \sum_{i<j} (-1)^{i+j-1} \frac{K}{\lambda_i \lambda_j}\, \theta_{ij},
\end{equation}
and
\begin{equation}\label{eq psi2}
	\Psi_3 = \sum_{i<j} (-1)^{i+j-1} (\lambda_i^2 + \lambda_i \lambda_j + \lambda_j^2)\, \theta_{ij},
\end{equation}
where the basis $3$-forms $\{\theta_{ij}\}$ are defined by
\begin{align*}
	\theta_{12} &= \omega_3 \wedge \omega_4 \wedge \omega_{12}, \quad & \theta_{13} &= \omega_2 \wedge \omega_4 \wedge \omega_{13}, \quad & \theta_{14} &= \omega_2 \wedge \omega_3 \wedge \omega_{14}, \\
	\theta_{23} &= \omega_1 \wedge \omega_4 \wedge \omega_{23}, \quad & \theta_{24} &= \omega_1 \wedge \omega_3 \wedge \omega_{24}, \quad & \theta_{34} &= \omega_1 \wedge \omega_2 \wedge \omega_{34}.
\end{align*}

For each $k \in \{1,2,3,4\}$, the constancy of $H$, $S$, and $P_4 = \frac{1}{2}S^2 - 4K$ yields:
$$
\begin{cases}
	h_{11k} + h_{22k} + h_{33k} = -h_{44k}, \\
	\lambda_1 h_{11k} + \lambda_2 h_{22k} + \lambda_3 h_{33k} = -\lambda_4 h_{44k}, \\
	\lambda_1^3 h_{11k} + \lambda_2^3 h_{22k} + \lambda_3^3 h_{33k} = -\lambda_4^3 h_{44k}.
\end{cases}
$$
Since the principal curvatures are pairwise distinct on $\Omega$, the coefficient matrix of $\{h_{11k}, h_{22k}, h_{33k}\}$ is non-singular. By virtue of Cramer's rule, we can express these components in terms of $h_{44k}$ as follows:
\begin{equation}\label{eq Cramer}
	\begin{aligned}
		h_{11k} &= \frac{\lambda_1(\lambda_3 - \lambda_4)(\lambda_2 - \lambda_4)}{\lambda_4(\lambda_1 - \lambda_3)(\lambda_2 - \lambda_1)}\, h_{44k}, \\
		h_{22k} &= \frac{\lambda_2(\lambda_1 - \lambda_4)(\lambda_3 - \lambda_4)}{\lambda_4(\lambda_1 - \lambda_2)(\lambda_2 - \lambda_3)}\, h_{44k}, \\
		h_{33k} &= \frac{\lambda_3(\lambda_1 - \lambda_4)(\lambda_4 - \lambda_2)}{\lambda_4(\lambda_1 - \lambda_3)(\lambda_2 - \lambda_3)}\, h_{44k}.
	\end{aligned}
\end{equation}

By substituting \eqref{eq Cramer} into the general exterior derivative formula \eqref{eq:dPsi-general} and incorporating the Gauss equations $R_{ijij} = 1 + \lambda_i \lambda_j$, we obtain the following expression for the derivative of $\Psi_{-1}$:
\begin{equation}\label{eq dPsi1}
	d\Psi_{-1} = \left\{ \sum_{i=1}^4 c_i h_{44i}^2 - 6K + \frac{1}{2}S \right\} \operatorname{vol} \qquad \text{on } \Omega,
\end{equation}
where the coefficients $c_i$ are defined by
$$
c_i = -4\gamma K \left( \lambda_j^4 + \lambda_k^4 + \lambda_l^4 - \lambda_j^2\lambda_k^2 - \lambda_j^2\lambda_l^2 - \lambda_k^2\lambda_l^2 \right)
$$
for any permutation $\{i,j,k,l\} = \{1,2,3,4\}$, with the positive factor $\gamma$ on $\Omega$ given by
$$
\gamma = \frac{1}{\lambda_4^2 (\lambda_1 - \lambda_2)^2 (\lambda_1 - \lambda_3)^2 (\lambda_2 - \lambda_3)^2} > 0.
$$
The constant term in \eqref{eq dPsi1} arises from the summation
$$
-\sum_{i<j} \frac{K}{\lambda_i\lambda_j}(1 + \lambda_i\lambda_j) = -6K + \frac{1}{2}S.
$$

To ensure self-containment and verify the sign of the coefficients, we explicitly compute the representative coefficient $c_1$:
$$
c_1 = \gamma K c'_1,
$$
where
\begin{align*}
	c'_1 =& -(\lambda_2 - \lambda_4)(\lambda_1 - \lambda_4)^2(\lambda_3 - \lambda_4) - (\lambda_2 - \lambda_4)(\lambda_1 - \lambda_4)^2(\lambda_2 - \lambda_3) \\
	&+ (\lambda_2 - \lambda_3)(\lambda_1 - \lambda_3)^2(\lambda_3 - \lambda_4) \\
	=& -4 \left( \lambda_2^4 + \lambda_3^4 + \lambda_4^4 - \lambda_2^2\lambda_3^2 - \lambda_2^2\lambda_4^2 - \lambda_3^2\lambda_4^2 \right).
\end{align*}
The second equality is obtained by substituting the minimality relation $\lambda_1 + \lambda_2 + \lambda_3 + \lambda_4 = 0$ into the algebraic expansion. For any three distinct indices $\{j,k,l\}$, we have
$$
\lambda_j^4 + \lambda_k^4 + \lambda_l^4 - \lambda_j^2\lambda_k^2 - \lambda_j^2\lambda_l^2 - \lambda_k^2\lambda_l^2 = \frac{1}{2} \left[ (\lambda_j^2 - \lambda_k^2)^2 + (\lambda_j^2 - \lambda_l^2)^2 + (\lambda_k^2 - \lambda_l^2)^2 \right] > 0
$$
on $\Omega$. Since $\gamma > 0$ and $K > 0$, it follows that $c_1 < 0$. By cyclic permutation of indices, we immediately obtain
\begin{equation}\label{eq:ci-negative}
	c_i < 0 \qquad \text{for all } i \in \{1,2,3,4\}.
\end{equation}

In a similar manner, the exterior derivative of the second weighted $3$-form $\Psi_3$ is computed as
\begin{equation}\label{eq dPsi2}
	d\Psi_3 = \left\{ \sum_{i=1}^4 d_i h_{44i}^2 + \frac{1}{4}S^2 - \frac{5}{2}S - 6K \right\} \operatorname{vol} \qquad \text{on } \Omega,
\end{equation}
where the coefficients $d_i$ are given by
$$
d_i = -\gamma \left[ \frac{S}{2\lambda_i^2} \left( \lambda_i^4 - \frac{S^2 + 16K}{2S}\lambda_i^2 + 3K \right)^2 + \frac{2K}{S}(S^2 - 16K)\lambda_i^2 \right].
$$
The constant term in \eqref{eq dPsi2} is obtained by summing
$$
-\sum_{i<j} (\lambda_i^2 + \lambda_i\lambda_j + \lambda_j^2)(1 + \lambda_i\lambda_j) = \frac{1}{4}S^2 - \frac{5}{2}S - 6K.
$$
We outline the reduction of the representative coefficient $d_1 = -\gamma d'_1$, where
\begin{align*}
	d'_1 &= 3\lambda_2^2\lambda_3^2\lambda_4^2 \left( \lambda_2^2 + \lambda_3^2 + \lambda_4^2 - \lambda_2\lambda_3 - \lambda_2\lambda_4 - \lambda_3\lambda_4 \right) \\
	&\quad - 6\lambda_1\lambda_2\lambda_3\lambda_4 \left( \lambda_2^2\lambda_3^2 + \lambda_2^2\lambda_4^2 + \lambda_3^2\lambda_4^2 - \lambda_2^2\lambda_3\lambda_4 - \lambda_2\lambda_3^2\lambda_4 - \lambda_2\lambda_3\lambda_4^2 \right) \\
	&\quad + \lambda_1^2 \Big( \lambda_2^4\lambda_3^2 + \lambda_2^2\lambda_3^4 + \lambda_2^4\lambda_4^2 + \lambda_2^2\lambda_4^4 + \lambda_3^4\lambda_4^2 + \lambda_3^2\lambda_4^4 + \lambda_2^3\lambda_3^3 + \lambda_2^3\lambda_4^3 + \lambda_3^3\lambda_4^3 \\
	&\qquad\quad - 2\lambda_2^4\lambda_3\lambda_4 - 2\lambda_2\lambda_3^4\lambda_4 - 2\lambda_2\lambda_3\lambda_4^4 + 3\lambda_2^2\lambda_3^2\lambda_4^2 \\
	&\qquad\quad - \lambda_2\lambda_3^2\lambda_4^3 - \lambda_2\lambda_3^3\lambda_4^2 - \lambda_2^2\lambda_3\lambda_4^3 - \lambda_2^3\lambda_3\lambda_4^2 - \lambda_2^2\lambda_3^3\lambda_4 - \lambda_2^3\lambda_3^2\lambda_4 \Big).
\end{align*}
To simplify $d'_1$, we introduce the elementary symmetric polynomials of the variables $\{\lambda_2, \lambda_3, \lambda_4\}$:
$$
\rho_1 = \lambda_2 + \lambda_3 + \lambda_4 = -\lambda_1, \quad \rho_2 = \lambda_2\lambda_3 + \lambda_2\lambda_4 + \lambda_3\lambda_4 = \lambda_1^2 - \frac{1}{2}S, \quad \rho_3 = \lambda_2\lambda_3\lambda_4 = \frac{K}{\lambda_1}.
$$
Under these algebraic substitutions, the expression for $d'_1$ reduces to
\begin{align*}
	d'_1 &= 3\rho_3^2(\rho_1^2 - 3\rho_2) - 6\lambda_1\rho_3(\rho_2^2 - 3\rho_1\rho_3) + \lambda_1^2(\rho_1^2\rho_2^2 - 4\rho_1^3\rho_3 + 6\rho_1\rho_2\rho_3 - \rho_2^3) \\
	&= \frac{1}{2}S\lambda_1^6 - \left( 8K + \frac{1}{2}S^2 \right) \lambda_1^4 + \left( 9KS + \frac{1}{8}S^3 \right) \lambda_1^2 - \left( 24K^2 + \frac{1}{2}KS^2 \right) + \frac{9K^2S}{2\lambda_1^2} \\
	&= \frac{S}{2\lambda_1^2} \left( \lambda_1^4 - \frac{S^2 + 16K}{2S}\lambda_1^2 + 3K \right)^2 + \frac{2K}{S}(S^2 - 16K)\lambda_1^2.
\end{align*}
On the hypersurface $M$, we have the general algebraic inequality
$$
S^2 = \left( \sum_{i=1}^4 \lambda_i^2 \right)^2 \ge 16|\lambda_1\lambda_2\lambda_3\lambda_4| = 16K,
$$
where the equality holds if and only if all $|\lambda_i|$ are identical. However, this homogeneous configuration is strictly excluded on the region $\Omega$. Consequently, the strict inequality $S^2 > 16K$ holds on $\Omega$, which guarantees $d_1 = -\gamma d'_1 < 0$. By symmetric permutation of the indices, we conclude that
\begin{equation}\label{eq:di-negative}
	d_i < 0 \qquad \text{for all } i \in \{1,2,3,4\} \text{ on } \Omega.
\end{equation}

\hfill $\Box$
%-----------------------------------------------------------------------------------------------------

\subsection{Rigidity under $K>0$. %: Spectral Stratification and Variational Cut-off
}\label{3.3}

In this subsection, we establish the rigidity of a closed minimal hypersurface $M^4 \subset \mathbb{S}^5$ in the positive curvature regime $K > 0$. The proof proceeds via a systematic combination of algebraic stratification, localized variational cut-off estimates, and topological obstruction theory.

We first analyze the characteristic polynomial of the shape operator, which yields a precise topological and analytical characterization of the open subset $\Omega \subset M$ where the four principal curvatures remain mutually distinct. Second, by employing the weighted $3$-forms $\Psi_{-1}$ and $\Psi_3$ constructed in Subsection~\ref{3.2}, we perform a localized cut-off analysis on collar neighborhoods of the singular strata $M \setminus \Omega$ to establish the global constancy of the third power sum $f = P_3=\tr A^3$ under either one of two auxiliary numerical inequalities. Finally, utilizing the Gauss-Bonnet-Chern formula coupled with the vanishing of the Euler characteristic $\chi(M) = 0$ (to be established in Subsection~\ref{3.4}), we demonstrate that these auxiliary inequalities unconditionally cover the entire positive regime $K > 0$.

\begin{lem}\label{lem lem3.3 figure}
	Let $M^4$ be a closed minimal hypersurface immersed in $\mathbb{S}^5$ with constant $S$ and constant $K > 0$. Let $\Omega = \{ p \in M \mid g(p) = 4 \}$ denote the open set where the four principal curvatures are pairwise distinct. Then there exists a non-negative constant $a \ge 0$, uniquely determined by $S$ and $K$, such that $f(M) \subset [-a, a]$. Furthermore, the following alternatives hold:
	\begin{enumerate}
		\item $\Omega = \emptyset$ if and only if $a = 0$. In this case, $f \equiv 0$ on $M$, and $M$ is congruent to the minimal Clifford torus
		$$
		\mathbb{S}^2\left(\frac{\sqrt{2}}{2}\right) \times \mathbb{S}^2\left(\frac{\sqrt{2}}{2}\right) \subset \mathbb{S}^5.
		$$
		\item $\Omega \neq \emptyset$ if and only if $a > 0$. In this case, the level sets of $f$ partition $M$ via
		$$
		f^{-1}(-a) \cup f^{-1}(a) = \{ p \in M \mid g(p) = 3 \}, \qquad f^{-1}(-a, a) = \Omega.
		$$
	\end{enumerate}
\end{lem}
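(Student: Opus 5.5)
The approach is to study the quartic $P_f(x)=x^4-\frac{S}{2}x^2-\frac{f}{3}x+K$ as a one-parameter family in $f$, with $S$ and $K$ fixed constants. At every point of $M$ this polynomial is the characteristic polynomial of $A$, so it has four real roots counted with multiplicity. Let $F\subset\mathbb R$ be the set of values $t$ for which $P_t$ has four real roots. We will show that $F=[-a,a]$ for some $a\ge0$ depending only on $S$ and $K$; it follows that $f(M)\subset F$.

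\textbf{Step 1: describe $F$.} Writing $P_t(x)=0$ as $t=\phi(x):=3(x^4-\frac S2x^2+K)/x$ for $x\neq0$ (recall that $K>0$ excludes $x=0$), the real roots of $P_t$ are exactly the intersections of the horizontal line at height $t$ with the graph of the odd function $\phi$. Since $K>0$, $\phi\to+\infty$ as $x\to0^+$ and as $x\to+\infty$. Hence on $(0,\infty)$ the function $\phi$ has a global minimum $m$, and on $(-\infty,0)$ it has the global maximum $-m$. A short computation of $\phi'(x)=3(3x^4-\frac S2x^2-K)/x^2$ shows that $\phi'$ has exactly one positive zero $x_0$, so $\phi$ is unimodal on each half-line. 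Consequently $P_t$ has four real roots (with multiplicity) if and only if $t\ge m$ and $t\le -m$. This forces $m\le0$, and with $a:=-m\ge0$ we get $F=[-a,a]$. Moreover, for $|t|<a$ the roots are simple: there are two on each half-line, coming from the two transversal crossings. For $t=\pm a$ there is exactly one double root, at $\pm x_0$, together with two simple roots on the other side, so $g=3$ there. If $a=0$, then $F=\{0\}$ and $P_0$ has double roots at $\pm x_0$, so $g=2$. This computation of the root structure is the main technical point; the delicate part is checking that when $t=\pm a$ with $a>0$ the other half-line contributes two \emph{distinct} roots. That holds because $\phi(-x_0)=a\ne-a$ when $a>0$.

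\textbf{Step 2: read off the alternatives.} Step 1 gives $f(M)\subset[-a,a]$, and pointwise $g(p)=4$ exactly when $|f(p)|<a$, while $g(p)=3$ exactly when $|f(p)|=a>0$. This yields the partition in (2). By Step 1, $g\ge3$ whenever $a>0$ and $g=2$ whenever $a=0$. So $\Omega\neq\emptyset$ forces $a>0$. Conversely, suppose $a>0$ but $\Omega=\emptyset$. Then $|f|\equiv a$, so $g\equiv3$ with constant principal curvatures, which gives an isoparametric hypersurface with $g=3$ in $\mathbb S^5$. Cartan's classification requires equal multiplicities, impossible with total dimension $4$. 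So $\Omega\neq\emptyset$ if and only if $a>0$.

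\textbf{Step 3: the case $a=0$.} Here $f\equiv0$ and the principal curvatures $\pm x_0,\pm x_0$ are constant, so $M$ is isoparametric with $g=2$ and multiplicities $(2,2)$. By Cartan's classical result for $g=2$ together with minimality, $M$ is the Clifford torus $\mathbb S^2(\frac{\sqrt2}{2})\times\mathbb S^2(\frac{\sqrt2}{2})$. Alternatively, one can use the Peng--Terng identity \eqref{eq:31-PT} as in Proposition~\ref{prop:K-negative}: constant principal curvatures and $g=2$ force $h_{ijk}=0$, hence $S=4$, and the result of Chern--do Carmo--Kobayashi/Lawson then identifies $M$.
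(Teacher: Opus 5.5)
Your proposal is correct and is essentially the paper's proof: the paper likewise rewrites the characteristic equation as $G(x)=x^3-\frac{S}{2}x+\frac{K}{x}=\frac{\zeta}{3}$ (your $\phi=3G$) and uses the unique critical points $\pm x_1$, $x_1^2=\frac{S+\sqrt{S^2+48K}}{12}$, to get $f(M)\subset[3G(x_1),3G(-x_1)]=[-a,a]$ with the same root-structure analysis. It then excludes the case $a>0$, $\Omega=\emptyset$ via Cartan's equal-multiplicity theorem for $g=3$, after using connectedness to make $f$ constant. One cosmetic slip: for $t=a$ the double root is at $-x_0$ (the maximum of $\phi$ on $(-\infty,0)$), and for $t=-a$ it is at $+x_0$, so ``at $\pm x_0$'' should read ``at $\mp x_0$''.
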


\begin{proof}
	By virtue of the minimality condition $H = 0$ and Newton's identities \eqref{eq condition}, the characteristic polynomial of the shape operator $A$ is given explicitly by
	$$
	F(x) = \prod_{i=1}^4(x - \lambda_i) = x^4 - \frac{S}{2}x^2 - \frac{f}{3}x + K.
	$$
	Since $K > 0$, no principal curvature vanishes on $M$. Thus, for any value $\zeta \in f(M)$, the principal curvatures correspond to the four real roots of the algebraic equation
	\begin{equation}\label{eq fig}
		G(x) := x^3 - \frac{S}{2}x + \frac{K}{x} = \frac{\zeta}{3}.
	\end{equation}
	The rational function $G(x)$ is odd on $\mathbb{R} \setminus \{0\}$, with its first derivative given by
	$$
	G'(x) = 3x^2 - \frac{S}{2} - \frac{K}{x^2}.
	$$
	The critical points of $G$ are determined by the equation $6x^4 - Sx^2 - 2K = 0$, which admits exactly two real roots:
	\begin{equation}\label{eq x1,x2 extreme point}
		x_1 = \sqrt{\frac{S + \sqrt{S^2 + 48K}}{12}}, \qquad x_2 = -x_1.
	\end{equation}
	Since the second derivative evaluates to $G''(x) = 6x + 2Kx^{-3}$, we have $G''(x_1) > 0$ and $G''(x_2) < 0$, identifying $x_1$ as a non-degenerate strict local minimum and $x_2$ as a non-degenerate strict local maximum. The graph of $G'(x)$ is shown in Figure \ref{fig:G-prime}. Consequently, $G$ increases strictly on $(-\infty, x_2)$ and $(x_1, +\infty)$, and decreases strictly on $(x_2, 0)$ and $(0, x_1)$. Since $G''(x) \neq 0$ on $\mathbb{R} \setminus \{0\}$, the critical points are non-degenerate, implying that any root of \eqref{eq fig} has multiplicity at most $2$, with $x_1$ and $x_2$ being the only possible multiple roots.

	\begin{figure}[!htbp]
	\centering
	\includegraphics[scale=0.6]{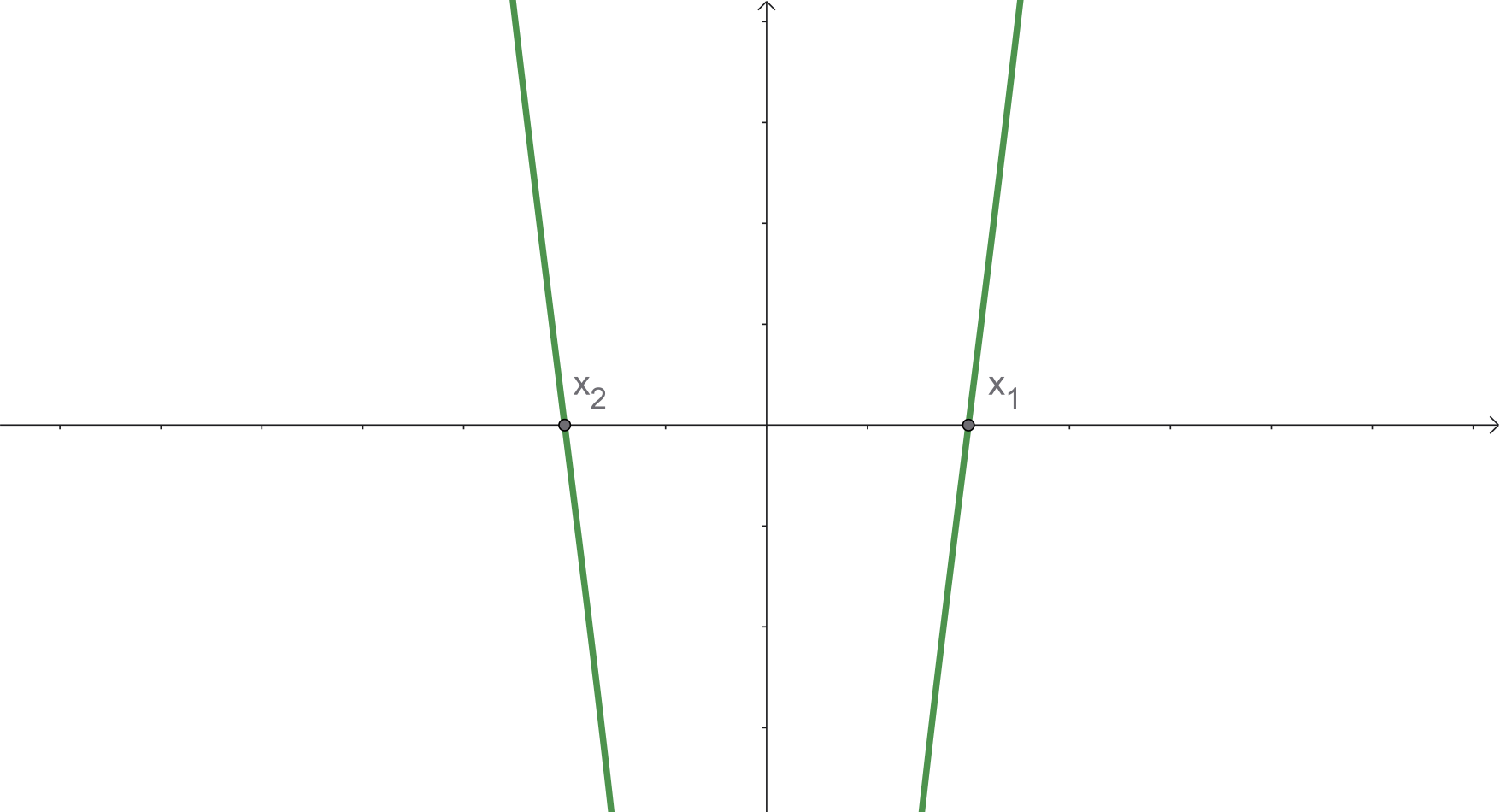}
	\caption{The graph of $G'$.}
	\label{fig:G-prime}
\end{figure}

For \eqref{eq fig} to possess four real roots (counted with multiplicity), the horizontal line $y = \zeta/3$ must intersect both branches of the graph of $G(x)$. By the monotonicity of $G$, this intersection occurs if and only if
$$
G(x_1) \le \frac{\zeta}{3} \le G(x_2),
$$
which implies $f(M) \subset [3G(x_1), 3G(x_2)]$. Since $f(M)$ is non-empty and $G$ is odd, we have $G(x_2) \ge 0 \ge G(x_1)$. We define
$$
a := 3G(x_2) = -3G(x_1) \ge 0.
$$

If $a = 0$, then $f(M) \subset \{0\}$, forcing $f \equiv 0$ on $M$, and thus $M$ is isoparametric. In this degenerate case, equation \eqref{eq fig} admits a double negative root $x_2$ and a double positive root $x_1 = -x_2$. Hence, the principal curvatures occur in opposite pairs $(\lambda, \lambda, -\lambda, -\lambda)$ with $\lambda > 0$. According to the classification of minimal isoparametric hypersurfaces in spheres, this $4$-dimensional minimal hypersurface with $g = 2$ and equal multiplicities must be congruent to the minimal Clifford torus $\mathbb{S}^2(\frac{\sqrt{2}}{2}) \times \mathbb{S}^2(\frac{\sqrt{2}}{2})$. This confirms that $a = 0$ is equivalent to $\Omega = \emptyset$. The corresponding graph of $G(x)$ is presented in Figure \ref{fig:G-degenerate} as follows

	\begin{figure}[!htbp]
	\centering
	\includegraphics[scale=0.8]{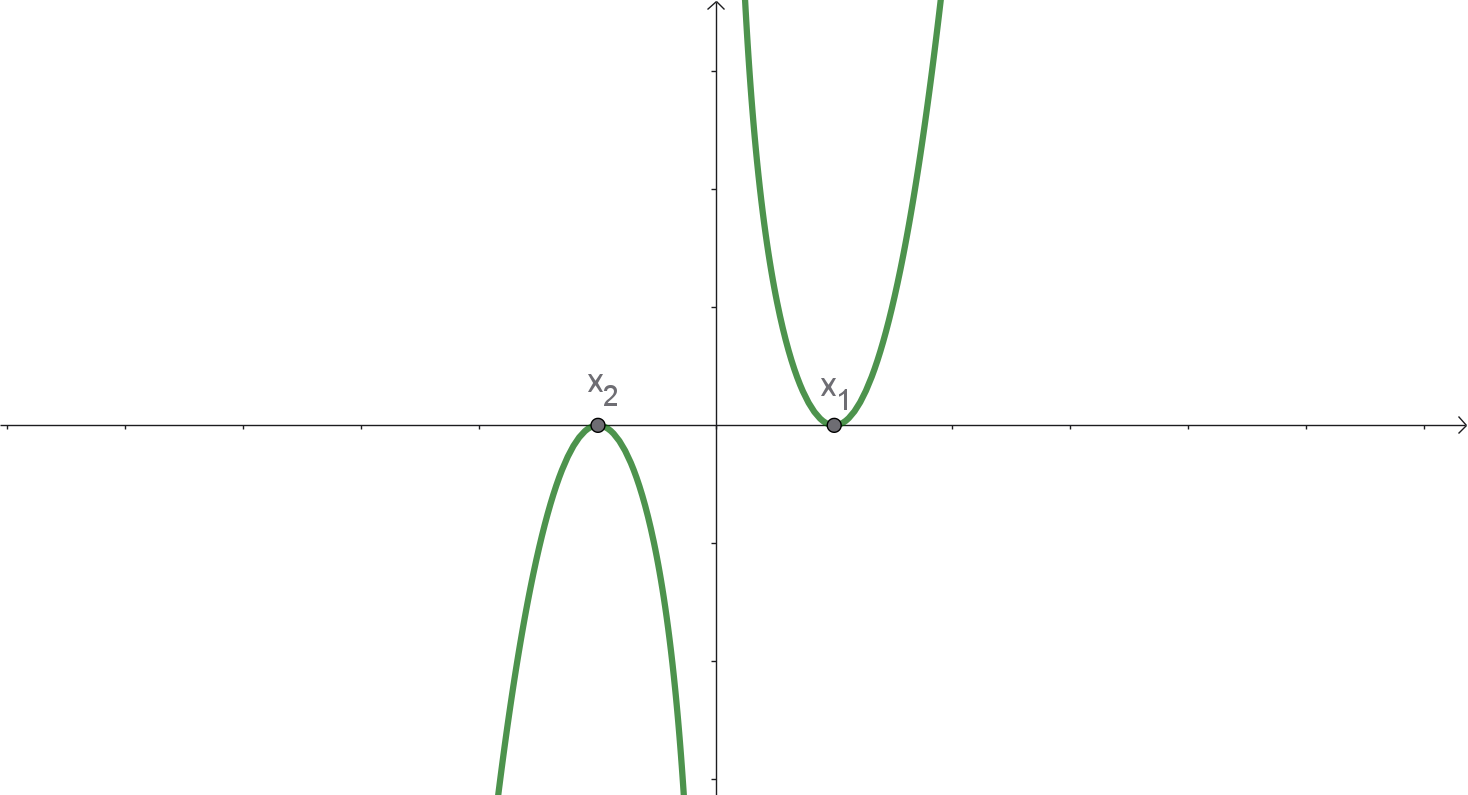}
	\caption{The degenerate case $G(x_1)=G(x_2)=0$.}
	\label{fig:G-degenerate}
\end{figure}

Now assume $a > 0$. For reference, the graph of $G(x)$ in this case is illustrated in Figure \ref{fig:G-nondegenerate}. If $\zeta = a$, then equation \eqref{eq fig} admits a double root at $x_2 < 0$ and two simple positive roots, yielding $g = 3$. By symmetry, if $\zeta = -a$, the system has a double root at $x_1 > 0$ and two simple negative roots, which again yields $g = 3$. For any $\zeta \in (-a, a)$, the line $y = \zeta/3$ lies strictly between the critical values, intersecting the graph of $G$ at four distinct simple roots, which corresponds to $g = 4$. This establishes the identification:
$$
f^{-1}(-a) \cup f^{-1}(a) = \{ p \in M \mid g(p) = 3 \}, \qquad f^{-1}(-a, a) = \Omega.
$$

\begin{figure}[!htbp]
	\centering
	\includegraphics[scale=0.8]{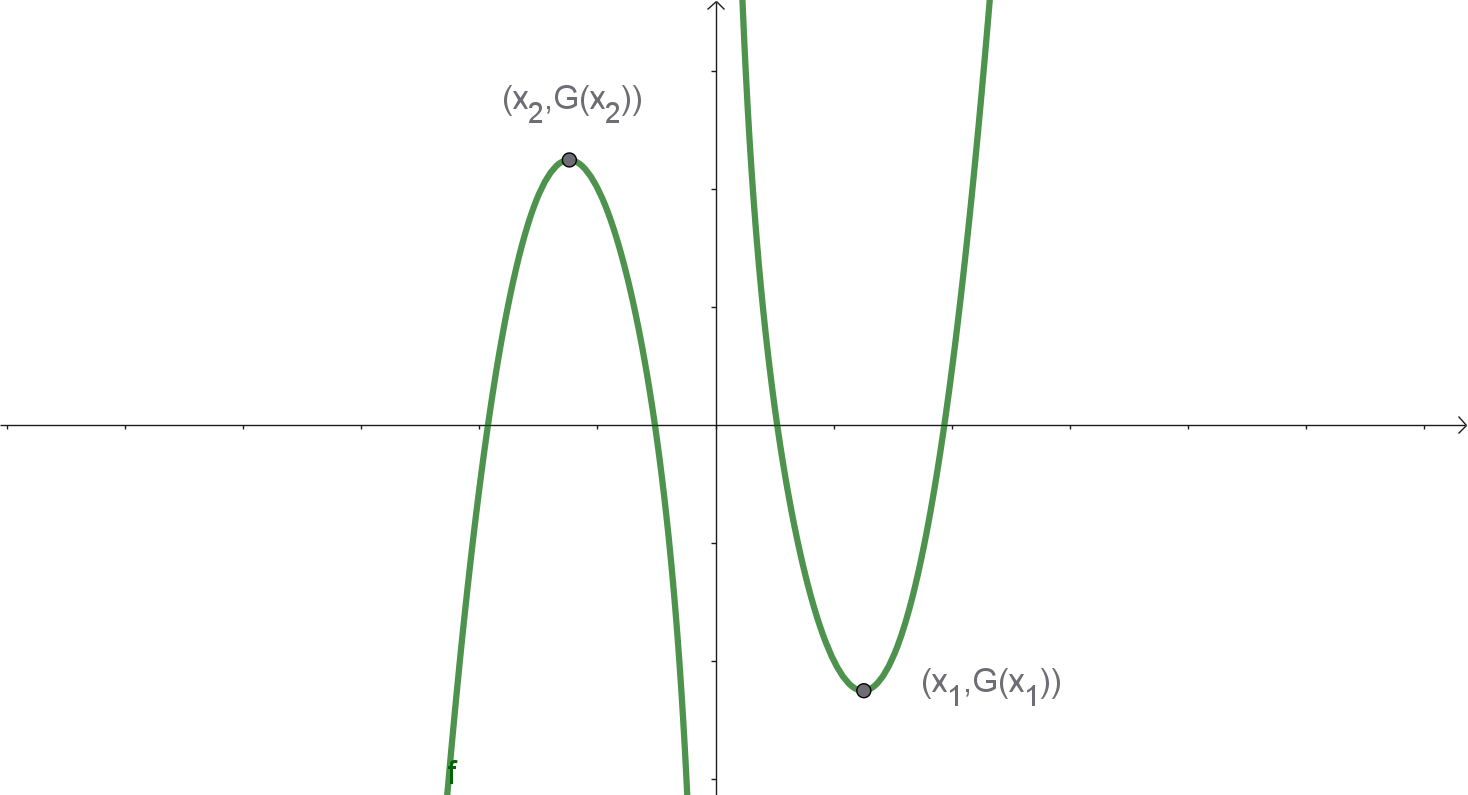}
	\caption{The nondegenerate case $G(x_1)<0<G(x_2)$.}
	\label{fig:G-nondegenerate}
\end{figure}

It remains to establish that $\Omega = \emptyset$ implies $a = 0$. Suppose, to the contrary, that $\Omega = \emptyset$ while $a > 0$. This hypothesis implies $f(M) \subset \{-a, a\}$. Since $M$ is connected and $f$ is smooth, $f$ must be constant on $M$, so $M$ is isoparametric. However, the root analysis above dictates that every point of $M$ would have exactly $g = 3$ distinct principal curvatures. By the classification, any minimal isoparametric hypersurface in $\mathbb{S}^{n+1}$ with $g = 3$ must have equal multiplicities of principal curvatures, requiring the dimension of the hypersurface to be a multiple of $3$, which contradicts $\dim M = 4$. Thus, the alternative $a > 0$ and $\Omega = \emptyset$ is geometrically obstructed, completing the proof.
			
		\end{proof}

For the remainder of this subsection, we assume $\Omega \neq \emptyset$, which implies $a > 0$. Following the variational cut-off framework established in \cite{ty23}, we partition $M$ into three disjoint strata:
\begin{align*}
	X &:= \{p \in M \mid f(p) = -a\} = f^{-1}(-a), \\
	Y &:= \{p \in M \mid -a < f(p) < a\} = \Omega, \\
	Z &:= \{p \in M \mid f(p) = a\} = f^{-1}(a).
\end{align*}
For any small parameter $0 < \epsilon < a$, we decompose $Y$ as $Y = X_\epsilon \cup Y_\epsilon \cup Z_\epsilon$, where
\begin{align*}
	X_{\epsilon} &:= \{p \in M \mid -a < f(p) < -a + \epsilon\}, \\
	Y_{\epsilon} &:= \{p \in M \mid -a + \epsilon \le f(p) \le a - \epsilon\}, \\
	Z_{\epsilon} &:= \{p \in M \mid a - \epsilon < f(p) < a\}.
\end{align*}
This decomposition ensures that the compact domain $Y_{\epsilon}$ remains a positive distance away from the multiple-root boundaries $X \cup Z$, while the boundary contributions generated on the collar regions $X_{\epsilon}$ and $Z_{\epsilon}$ can be analytically controlled as $\epsilon \to 0$.

\begin{prop}\label{propK>0}
			Let $M^4$ be a closed oriented minimal hypersurface immersed in $\mathbb{S}^5$ with constant $S$ and constant $K > 0$. Suppose that
			$\Omega\ne\emptyset$. If either
			\begin{equation}\label{eq:inequalities}
				6K - \frac{1}{2}S \ge 0 \qquad \text{or} \qquad 6K - \frac{1}{4}S^2 + \frac{5}{2}S \ge 0,
			\end{equation}
			then $M$ is isoparametric with $6K - \frac{1}{2}S=0$ or $6K - \frac{1}{4}S^2 + \frac{5}{2}S =0$, respectively.
\end{prop}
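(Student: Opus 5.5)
The plan is a Tang--Yan type cut-off argument applied to $\Psi_{-1}$ (for the first inequality) or $\Psi_3$ (for the second). Both cases have the same structure, so I describe the first. By \eqref{eq dPsi1}, on $\Omega$ we have
\[
d\Psi_{-1}=\Big\{\sum_i c_i h_{44i}^2-\big(6K-\tfrac12 S\big)\Big\}\operatorname{vol},
\]
and by \eqref{eq:ci-negative} every $c_i<0$. So under the hypothesis $6K-\frac12S\ge0$ the form $d\Psi_{-1}$ is a nonpositive multiple of $\operatorname{vol}$ on $\Omega$. I will show that $\int_\Omega d\Psi_{-1}=0$. Granting this, the integrand must vanish identically, which forces $6K-\frac12S=0$ and $h_{44i}\equiv0$ on $\Omega$.

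By \eqref{eq Cramer}, $h_{44i}=0$ gives $h_{kki}=0$ for all $k$. Hence $df=3\sum_k\lambda_k^2h_{kk i}\,\omega_i=0$ on $\Omega$. Thus $f$ is locally constant on the open set $\Omega=f^{-1}(-a,a)$. Since $f$ is continuous on connected $M$ with values in $[-a,a]$, its image is an interval. The set $f(\Omega)$ is contained in the open interval $(-a,a)$, and $f(M)\setminus f(\Omega)\subset\{\pm a\}$, so $f(\Omega)$ cannot be a finite set unless $f$ is constant. More concretely, if $f$ were nonconstant its image would be a nondegenerate interval meeting $(-a,a)$ in uncountably many values, while $f$ is locally constant on $\Omega$. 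That contradicts Sard's theorem: all such values would be critical values of $f$, yet they are attained. Hence $f$ is constant, and $M$ is isoparametric (necessarily with $\Omega=M$ and $g\equiv4$).

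The key step is proving $\int_\Omega d\Psi_{-1}=0$ via cut-off. I would take a smooth function $\phi_\epsilon(f)$ equal to $1$ on $Y_\epsilon$ and vanishing near $X\cup Z$, for instance
\[
\phi_\epsilon=\varphi\big((f+a)/\epsilon\big)\,\varphi\big((a-f)/\epsilon\big)
\]
with $\varphi$ a fixed cut-off. Then Stokes' theorem gives
\[
0=\int_M d(\phi_\epsilon\Psi_{-1})=\int\phi_\epsilon\, d\Psi_{-1}+\int d\phi_\epsilon\wedge\Psi_{-1}.
\]
The first integral has a sign-definite integrand and converges by monotone convergence. It then suffices to show that the boundary term $\int_{X_\epsilon\cup Z_\epsilon}\phi'_\epsilon\, df\wedge\Psi_{-1}$ tends to $0$.

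This estimate is the main obstacle, since $\Psi_{-1}$ contains $\omega_{ij}=\sum_m h_{ijm}\omega_m/(\lambda_i-\lambda_j)$, which blows up near $g=3$. Near $Z$, only the pair of roots merging at $x_2$ becomes singular, say $\lambda_1,\lambda_2\to x_2$. Two facts control the behavior there. First, $a-f$ vanishes to second order in $\lambda_2-\lambda_1$: since $x_2$ is a nondegenerate maximum of $G$, we have $a-f\sim c(\lambda_2-\lambda_1)^2$. Second, by \eqref{eq Cramer}, $df$ carries the factor $\lambda_4^{-1}h_{44k}$ times a Vandermonde factor. I would write
\[
df\wedge\theta_{12}=3\sum_k\lambda_k^2\,h_{kk m}\,\omega_m\wedge\theta_{12}
\]
and use the Cramer relations to show that the product $df\wedge\omega_{12}$ is bounded by $C|\nabla A|^2$. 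Here the factor $(\lambda_1-\lambda_2)$ contained in $df$ should cancel the denominator: indeed $h_{11k}-h_{22k}$ and $\lambda_1^2h_{11k}+\lambda_2^2h_{22k}$ combine through $\sum h_{iik}=0$-type relations. On the other hand, $|\phi'_\epsilon|\lesssim\epsilon^{-1}$ is supported where $a-f<\epsilon$, i.e.\ where $|\lambda_1-\lambda_2|\lesssim\sqrt\epsilon$. So I will aim for the bound
\[
|d\phi_\epsilon\wedge\Psi_{-1}|\le C\,\epsilon^{-1}|df|\,|\nabla A|/|\lambda_1-\lambda_2|\quad\text{with }|df|\lesssim|\lambda_1-\lambda_2|\,|\nabla A|,
\]
giving an integrand $\lesssim\epsilon^{-1}|\nabla A|^2\chi_{\{a-\epsilon<f<a\}}$. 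Its integral tends to zero provided $\mathrm{vol}(Z_\epsilon)=o(\epsilon)$. If that fails, I would instead use a logarithmic cut-off in $f$ (as in Tang--Yan), which needs only $\int_{Z_\epsilon}(a-f)^{-1}|df|\to0$, reducing to $|df|^2\lesssim (a-f)$. This follows from $a-f\sim(\lambda_1-\lambda_2)^2$, since $|df|\lesssim|\lambda_1-\lambda_2|$. The case $X$ is symmetric, and the $\Psi_3$ case is identical using \eqref{eq dPsi2} and \eqref{eq:di-negative}.
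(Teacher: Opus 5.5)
Your overall scheme matches the paper's: a cut-off in $f$, Stokes, the sign of $d\Psi_{-1}$ from $c_i<0$ and the hypothesis, then $h_{44i}=0\Rightarrow df=0$. Your endgame is fine, though the Sard argument is unnecessary, since $df=0$ on $X\cup Z$ simply because these are the extremum sets of $f$.

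\textbf{The gap.} The gap is the step you flag yourself: showing that the boundary term $\int(\phi_\epsilon'\circ f)\,df\wedge\Psi_{-1}$ tends to $0$. Your bound $|df\wedge\Psi_{-1}|\le C|\nabla A|^2$ is correct; it follows from $|f_j|\lesssim|\lambda_2-\lambda_1|\,|h_{11j}|$. But it only shows the integrand is bounded, not small. You are then left needing $\epsilon^{-1}\mathrm{vol}(Z_\epsilon)\to0$, and nothing you have proved supplies this.

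Nothing in your argument excludes the model situation where $Z$ contains a piece of hypersurface and $a-f\approx c\,r^2$, with $r$ the distance to $Z$. This is consistent with $|df|^2\lesssim a-f$. In that model $\mathrm{vol}(Z_\epsilon)\sim\sqrt{\epsilon}$, so your bound blows up like $\epsilon^{-1/2}$.

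The logarithmic fallback does not rescue this. The inequality $|df|^2\lesssim a-f$ only gives $(a-f)^{-1}|df|\lesssim(a-f)^{-1/2}$. In the same model $(a-f)^{-1}|df|\approx 2/|r|$, which is not integrable across $Z$. So the condition you reduce to is not implied by what you know, and can in fact fail.

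\textbf{The missing idea: use signs, not sizes.} Write $df\wedge\Psi_{-1}=\sum_iu_if_i^2\,\mathrm{vol}$ via \eqref{eq hiij solution}. The paper shows (Assertion~\ref{asser}) that the coefficients attached to the coalescing pair blow up with a definite sign: $+\infty$ as $f\to a^-$ and $-\infty$ as $f\to-a^+$. The remaining coefficients have finite limits. Hence $u_i\ge-\mathcal{C}$ on $Z_\epsilon$ and $u_i\le\mathcal{C}$ on $X_\epsilon$.

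Now choose the cut-off with $\eta_\epsilon'\le0$ near $a$ and $\eta_\epsilon'\ge0$ near $-a$. Then the singular contributions have the favorable sign and can simply be discarded, giving
\begin{equation*}
0\le\int_Y-(\eta_\epsilon\circ f)\,d\Psi_{-1}=\int_Y(\eta_\epsilon'\circ f)\,df\wedge\Psi_{-1}\le\mathcal{C}\int_Y|\eta_\epsilon'\circ f|\,|df|^2\,\mathrm{vol}.
\end{equation*}
The right side needs no volume information. Take $\xi_\epsilon$ with $\xi_\epsilon'=|\eta_\epsilon'|$ and $|\xi_\epsilon|\le1$, supported off $Y_\epsilon$. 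Integrating by parts gives $\int|\eta_\epsilon'\circ f|\,|df|^2=-\int(\xi_\epsilon\circ f)\Delta f\le\int_{M\setminus Y_\epsilon}|\Delta f|$. This tends to $\int_{X\cup Z}|\Delta f|=0$, because $\Delta f=0$ almost everywhere on the critical sets $X$ and $Z$.

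The same argument works for $\Psi_3$ with the coefficients $v_i$. Without this one-sided structure and the $\Delta f$ integration by parts, your proof of $\int_\Omega d\Psi_{-1}=0$ does not close.
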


\begin{proof}
	We present the proof assuming the first inequality in \eqref{eq:inequalities}; the second case follows analogously by substituting the weighted form $\Psi_3$ in place of $\Psi_{-1}$.
	
	Let $\eta_{\epsilon} \in C^\infty(\mathbb{R}, [0, 1])$ be a smooth cut-off function satisfying:
	\begin{enumerate}
		\item $\eta_{\epsilon}(t) = 0$ on $(-\infty, -a + \frac{\epsilon}{4}] \cup [a - \frac{\epsilon}{4}, +\infty)$,
		\item $\eta_{\epsilon}(t) = 1$ on $[-a + \epsilon, a - \epsilon]$,
		\item $\eta_{\epsilon}'(t) \ge 0$ on $(-\infty, 0)$ and $\eta_{\epsilon}'(t) \le 0$ on $(0, +\infty)$.
	\end{enumerate}
	Since $\eta_{\epsilon} \circ f$ vanishes in a neighborhood of $M \setminus \Omega$, the form $(\eta_{\epsilon} \circ f) \Psi_{-1}$, initially defined only on $\Omega$, extends smoothly to the entire manifold $M$ by setting it to zero on $M \setminus \Omega$. Since $\eta_\epsilon \circ f$ has compact support in $Y$, Stokes' theorem yields
	\begin{equation}\label{eq stokes 1}
		\int_Y (\eta_{\epsilon} \circ f) d\Psi_{-1} + \int_Y (\eta_{\epsilon}' \circ f) df \wedge \Psi_{-1} = 0.
	\end{equation}

On the open set $Y$, the principal curvatures are pairwise distinct. Differentiating the constant invariants $H$, $S$, $P_4$ along with the variable $f$ yields the matrix system:
\begin{equation*}
	\begin{pmatrix}
		1 & 1 & 1 & 1 \\
		\lambda_1 & \lambda_2 & \lambda_3 & \lambda_4 \\
		\lambda_1^2 & \lambda_2^2 & \lambda_3^2 & \lambda_4^2 \\
		\lambda_1^3 & \lambda_2^3 & \lambda_3^3 & \lambda_4^3
	\end{pmatrix}
	\begin{pmatrix}
		h_{11j} \\ h_{22j} \\ h_{33j} \\ h_{44j}
	\end{pmatrix}
	=
	\begin{pmatrix}
		0 \\ 0 \\ e_j(f)/3 \\ 0
	\end{pmatrix}.
\end{equation*}
Since the principal curvatures are distinct on $Y$, the Vandermonde coefficient matrix is non-singular. Applying Cramer's rule yields the following expressions for the derivatives (where $f_j := e_j(f)$):
\begin{equation}\label{eq hiij solution}
	h_{iij} = -\frac{f_j}{3} \frac{\lambda_i}{\prod_{k \neq i}(\lambda_k - \lambda_i)}.
\end{equation}
Substituting \eqref{eq hiij solution} into the wedge products $df \wedge \Psi_{-1}$ and $df \wedge \Psi_{3}$, we obtain
	\begin{align}\label{eq df wedge psi}
	df\wedge \Psi_{-1}=&\big(\frac{\lambda_3\lambda_4}{\lambda_1-\lambda_2}f_1h_{221}-\frac{\lambda_3\lambda_4}{\lambda_1-\lambda_2}f_2h_{112}+\frac{\lambda_2\lambda_4}{\lambda_1-\lambda_3}f_1h_{331}-\frac{\lambda_2\lambda_4}{\lambda_1-\lambda_3}f_3h_{113}\\
	&+\frac{\lambda_2\lambda_3}{\lambda_1-\lambda_4}f_1h_{441}-\frac{\lambda_2\lambda_3}{\lambda_1-\lambda_4}f_4h_{114}+\frac{\lambda_1\lambda_4}{\lambda_2-\lambda_3}f_2h_{332}-\frac{\lambda_1\lambda_4}{\lambda_2-\lambda_3}f_3h_{223}\notag\\
	&+\frac{\lambda_1\lambda_3}{\lambda_2-\lambda_4}f_2h_{442}-\frac{\lambda_1\lambda_3}{\lambda_2-\lambda_4}f_4h_{224}+\frac{\lambda_1\lambda_2}{\lambda_3-\lambda_4}f_3h_{443}-\frac{\lambda_1\lambda_2}{\lambda_3-\lambda_4}f_4h_{334}\big)~\text{vol}\notag\\
	:=&\sum_{i=1}^4u_i f_i^2\,\mathrm{vol},\notag
\end{align}
\begin{align}\label{eq df wedge phi}
	df\wedge\Psi_{3}
	=&\big(\frac{\lambda_1^2+\lambda_2^2+\lambda_1\lambda_2}{\lambda_1-\lambda_2}f_1h_{221}-\frac{\lambda_1^2+\lambda_2^2+\lambda_1\lambda_2}{\lambda_1-\lambda_2}f_2h_{112}+\frac{\lambda_1^2+\lambda_3^2+\lambda_1\lambda_3}{\lambda_1-\lambda_3}f_1h_{331}\\
	&-\frac{\lambda_1^2+\lambda_3^2+\lambda_1\lambda_3}{\lambda_1-\lambda_3}f_3h_{113}+\frac{\lambda_1^2+\lambda_4^2+\lambda_1\lambda_4}{\lambda_1-\lambda_4}f_1h_{441}-\frac{\lambda_1^2+\lambda_4^2+\lambda_1\lambda_4}{\lambda_1-\lambda_4}f_4h_{114}\notag\\
	&+\frac{\lambda_2^2+\lambda_3^2+\lambda_2\lambda_3}{\lambda_2-\lambda_3}f_2h_{332}-\frac{\lambda_2^2+\lambda_3^2+\lambda_2\lambda_3}{\lambda_2-\lambda_3}f_3h_{223}+\frac{\lambda_2^2+\lambda_4^2+\lambda_2\lambda_4}{\lambda_2-\lambda_4}f_2h_{442}\notag\\
	&-\frac{\lambda_2^2+\lambda_4^2+\lambda_2\lambda_4}{\lambda_2-\lambda_4}f_4h_{224}+\frac{\lambda_3^2+\lambda_4^2+\lambda_3\lambda_4}{\lambda_3-\lambda_4}f_3h_{443}-\frac{\lambda_3^2+\lambda_4^2+\lambda_3\lambda_4}{\lambda_3-\lambda_4}f_4h_{334}\big)~\text{vol}\notag \\
	:=&\sum_{i=1}^4v_i f_i^2\,\mathrm{vol}.\notag
\end{align}
where $\{u_i\}$ and $\{v_i\}$ are smooth functions on $Y$. We shall employ the following boundary estimate, whose detailed proof is deferred to Subsection~\ref{3.5}:

\begin{asser}\label{asser}
	There exists a uniform constant $\mathcal{C} > 0$ depending only on $S$ and $K$ such that, for $K > 0$, we have $u_i, v_i \ge -\mathcal{C}$ on $Z_{\epsilon}$ and $u_i, v_i \le \mathcal{C}$ on $X_{\epsilon}$ for all $i \in \{1, 2, 3, 4\}$.
\end{asser}

We claim that the cut-off boundary contribution satisfies
\begin{equation}\label{eq cutoff-limit}
	\lim_{\epsilon \to 0} \int_Y (\eta_{\epsilon} \circ f) d\Psi_{-1} = 0.
\end{equation}
Indeed, if $X \cup Z = \emptyset$, then $Y = M$. For sufficiently small $\epsilon$, the support of $f(M)$ is contained in $[-a+\epsilon, a-\epsilon]$, which implies $\eta_{\epsilon} \circ f \equiv 1$ on $M$, and \eqref{eq cutoff-limit} follows directly from Stokes' theorem. 

Now assume $X \cup Z \neq \emptyset$. Under our sign conventions \eqref{eq:ci-negative}, the assumption $6K - \frac{1}{2}S \ge 0$ implies
$$
-d\Psi_{-1} = \left( \sum_{i=1}^4 (-c_i) h_{44i}^2 + 6K - \frac{1}{2}S \right) \operatorname{vol} \ge 0 \qquad \text{on } Y.
$$
Combining this with \eqref{eq stokes 1}, \eqref{eq df wedge psi}, and Assertion~\ref{asser}, we obtain the estimate:
\begin{equation}\label{eq cutoff-bound}
	\begin{aligned}
		0 \le \int_Y -(\eta_{\epsilon} \circ f) d\Psi_{-1} 
		&=\int_Y (\eta_\epsilon'\circ f)\,df\wedge\Psi_{1}= \int_{X_{\epsilon} \cup Z_{\epsilon}} (\eta_{\epsilon}' \circ f) \sum_{i=1}^4 u_i f_i^2 \operatorname{vol} \\
		&= \int_{X_{\epsilon}} |\eta_{\epsilon}' \circ f| \sum_{i=1}^4 u_i f_i^2 \operatorname{vol} + \int_{Z_{\epsilon}} |\eta_{\epsilon}' \circ f| \sum_{i=1}^4 (-u_i) f_i^2 \operatorname{vol} \\
		&\le \mathcal{C} \int_Y |\eta_{\epsilon}' \circ f| |df|^2 \operatorname{vol}.
	\end{aligned}
\end{equation}
We define the auxiliary smooth function $\xi_{\epsilon} \in C^\infty(\mathbb{R})$ by
\begin{equation*}
	\xi_{\epsilon}(t) = \begin{cases}
		\eta_{\epsilon}(t) - 1 & \text{on } (-\infty, 0], \\
		1 - \eta_{\epsilon}(t) & \text{on } [0, +\infty),
	\end{cases}
\end{equation*}
so that $\xi_{\epsilon}' = |\eta_{\epsilon}'|$. Let $\star$ denote the Hodge star operator. Since $M$ is closed, applying Stokes' theorem to $d\star \big((\xi_\epsilon \circ f)df\big) = (\xi_\epsilon' \circ f)|df|^2 \operatorname{vol} + (\xi_\epsilon \circ f)\Delta f \operatorname{vol}$ yields
\begin{equation}\label{eq endpoint-delta}
	\int_Y |\eta_\epsilon' \circ f| |df|^2 \operatorname{vol} = \int_M (\xi_\epsilon' \circ f)|df|^2 \operatorname{vol} = -\int_M (\xi_\epsilon \circ f)\Delta f \operatorname{vol}.
\end{equation}
Applying the dimension-independent endpoint analysis established in \cite[Section 4]{ty23}, we have
$$
\lim_{\epsilon \to 0} \int_{M \setminus Y_{\epsilon}} |\Delta f| \operatorname{vol} = 0.
$$
Since $|\xi_{\epsilon}| \le 1$ and $\operatorname{supp}(\xi_{\epsilon} \circ f) \subset M \setminus Y_{\epsilon}$, relation \eqref{eq endpoint-delta} guarantees that
$$
\int_Y |\eta_{\epsilon}' \circ f| |df|^2 \operatorname{vol} \le \int_{M \setminus Y_{\epsilon}} |\xi_{\epsilon} \circ f| |\Delta f| \operatorname{vol} \le \int_{M \setminus Y_{\epsilon}} |\Delta f| \operatorname{vol} \longrightarrow 0 \qquad \text{as } \epsilon \to 0.
$$
In view of \eqref{eq cutoff-bound}, this establishes the limit \eqref{eq cutoff-limit}.

Since $6K-\tfrac{1}{2}S \ge 0$, $-c_i > 0$ on $Y$, and $\eta_{\epsilon} \circ f = 1$ on $Y_{\epsilon}$, we have
$$
\int_Y -(\eta_{\epsilon} \circ f) d\Psi_{-1} \ge \int_{Y_{\epsilon}} \sum_{i=1}^4 (-c_i) h_{44i}^2 \operatorname{vol}.
$$
Taking the limit as $\epsilon \to 0$, the squeeze theorem yields
$$
\int_Y \sum_{i=1}^4 (-c_i) h_{44i}^2 \operatorname{vol} = 0,
$$
which forces $h_{44i} = 0$ on $Y$ for all $i \in \{1, 2, 3, 4\}$. By \eqref{eq hiij solution}, the vanishing of $\{h_{44i}\}$ implies $df = 0$ on $Y$. Because $M = X \cup Y \cup Z$, and the smooth function $f$ trivially satisfies $df = 0$ on the global extremum sets $X$ and $Z$, we conclude that $df \equiv 0$ on $M$. Thus, $f$ is constant, and $M$ is isoparametric.  

Moreover, \eqref{eq cutoff-limit} together with $\{h_{44i}=0\}$ implies $6K-\frac{1}{2}S=0$. Analogously, when it comes to $\Psi_{3}$, we have $\frac{1}{4}S^{2}-\frac{5}{2}S-6K=0$.

\end{proof}

\begin{rem}
	An analogous cut-off analysis applies to the negative curvature regime $K = \text{const.} < 0$ by utilizing the weighted form $\Psi_{-1}$ on the set $M \setminus \Omega$.
\end{rem}

To remove the auxiliary inequality constraints \eqref{eq:inequalities}, we recall the classical Gauss-Bonnet-Chern formula in our setting:
		
\begin{lem}[\cite{zhang01}]\label{GBC}
			Let $N^{2k}$ be a closed, even-dimensional Riemannian manifold. Then the Euler characteristic of $N$ is given by
			$$
			\chi(N) = \left( \frac{-1}{2\pi} \right)^k \int_N \operatorname{Pf}(R),
			$$
			where $\operatorname{Pf}(R)$ represents the Pfaffian of the curvature $2$-form matrix:
			\begin{equation*}
				\mathrm{Pf}(R):=\frac{1}{2^{k}k!}\sum_{i_{1},i_{2},\dots,i_{2k}}\delta_{i_{1},i_{2},\dots,i_{2k}}^{1,2,\dots ,2k}R_{i_{1}i_{2}}\wedge\cdots\wedge R_{i_{2k-1}i_{2k}},
			\end{equation*}
			where 
			\begin{equation*}
				\delta_{i_{1},i_{2},\dots,i_{2k}}^{1,2,\dots ,2k}=
				\left\{
				\begin{aligned}
					1&, \text{ if $(i_{1},i_{2},\dots,i_{2k})$ is an even permutation of $(1,2,\dots ,2k)$},\\
					-1&, \text{ if $(i_{1},i_{2},\dots,i_{2k})$ is an odd permutation of $(1,2,\dots ,2k)$},\\
					0&, \text{ else}.
				\end{aligned}
				\right.
			\end{equation*}
\end{lem}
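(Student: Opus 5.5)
The statement to prove is the classical Gauss--Bonnet--Chern theorem, which the paper quotes from \cite{zhang01}. The plan is the Chern transgression argument, with the normalisation fixed by the paper's convention $d\omega=\omega\wedge\omega-R$.

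\textbf{Step 1: the Pfaffian form is global.} Under a change of orthonormal frame $\omega\mapsto g\omega g^{-1}+dg\,g^{-1}$ the curvature matrix transforms as $R\mapsto gRg^{-1}$ with $g\in SO(2k)$. Since $\operatorname{Pf}(gRg^{-T})=\det(g)\operatorname{Pf}(R)$, the form $\operatorname{Pf}(R)$ is invariant under oriented frame changes. It therefore defines a global $2k$-form on $N$ when $N$ is oriented. In the non-orientable case, pass to the orientation double cover $\widetilde N$, where both sides of the formula double.

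\textbf{Step 2: exactness on the sphere bundle.} Let $\pi\colon SN\to N$ be the unit sphere bundle. On $SN$ choose frames with $e_{2k}$ equal to the tautological unit vector. Following Chern, define forms $\Phi_j$ built from the $\omega_{i,2k}$ and the $R_{ij}$. Using the structure equations \eqref{streq1} and the Bianchi identity $dR=\omega\wedge R-R\wedge\omega$, verify the identity
\[
\pi^*\operatorname{Pf}(R)=d\Pi,\qquad \Pi=\sum_{j=0}^{k-1}c_j\Phi_j ,
\]
with explicit constants $c_j$. This is the step I expect to be the main obstacle. It requires a careful combinatorial bookkeeping of the terms $d\Phi_j$, which telescope in pairs. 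It is also where the sign conventions of the paper (the minus sign in $-R$ and the factor $(-1/2\pi)^k$) must be matched exactly.

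\textbf{Step 3: localisation at zeros of a vector field.} Take a vector field $V$ with isolated nondegenerate zeros $p_1,\dots,p_m$. The map $V/|V|$ is a section $s$ of $SN$ over $N\setminus\{p_\alpha\}$. Remove small balls $B_\varepsilon(p_\alpha)$ and apply Stokes' theorem:
\[
\int_N\operatorname{Pf}(R)=\lim_{\varepsilon\to0}\int_{N\setminus\cup B_\varepsilon}s^*d\Pi=-\lim_{\varepsilon\to0}\sum_\alpha\int_{\partial B_\varepsilon(p_\alpha)}s^*\Pi .
\]
As $\varepsilon\to0$, only the $j=0$ term survives. This term is the pullback of the normalised volume form of the fibre $S^{2k-1}$. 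Its boundary integral therefore equals $c_0\operatorname{vol}(S^{2k-1})\operatorname{ind}_{p_\alpha}(V)$.

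\textbf{Step 4: Poincar\'e--Hopf and normalisation.} By the Poincar\'e--Hopf theorem, $\sum_\alpha\operatorname{ind}_{p_\alpha}V=\chi(N)$. Computing $c_0$ with $\operatorname{vol}(S^{2k-1})=2\pi^k/(k-1)!$ produces the constant $(-2\pi)^k$. It is cleanest to fix the overall sign by checking the round sphere $S^{2k}$. There $R_{ij}=-\omega_i\wedge\omega_j$ up to the paper's convention, so the formula gives $\chi(S^{2k})=2$. This calibrates the sign, and the result follows.
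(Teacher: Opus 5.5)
The paper does not prove this lemma; it quotes it from \cite{zhang01}. Your outline (frame invariance of $\operatorname{Pf}(R)$, Chern's transgression on the unit sphere bundle, localisation at zeros of a vector field, Poincar\'e--Hopf) is the standard intrinsic proof, and its architecture is fine. It breaks down at the sign calibration in Step 4.

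In the convention you explicitly adopt, \eqref{streq1}--\eqref{gauss}, the unit sphere has $R_{ij}=+\omega_i\wedge\omega_j$, not $-\omega_i\wedge\omega_j$. To see this, put $h=0$ in \eqref{gauss} and \eqref{eq curvature form Rij}, or $\lambda_i=0$ in the paper's own formula $R_{ij}=(1+\lambda_i\lambda_j)\omega_i\wedge\omega_j$. Hence on $S^{2k}$ one has $\operatorname{Pf}(R)=(2k-1)!!\,\mathrm{vol}$ and $\int_{S^{2k}}\operatorname{Pf}(R)=2^{k+1}\pi^k$. The right-hand side of the lemma then equals $2(-1)^k$, so your calibration actually forces the constant $(2\pi)^{-k}$.

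Steps 2--3 give the same answer if you carry them out. Take $k=1$. The structure equation gives $\pi^*R_{12}=-d\omega_{12}$ on $SN$, so $\Pi=-\omega_{12}$. Near a zero, $s^*\omega_{12}$ differs from $d\theta$ by a smooth $1$-form. Therefore $\int_N\operatorname{Pf}(R)=+2\pi\chi(N)$, i.e.\ $\chi(N)=\frac{1}{2\pi}\int_N K_G\,dA$. The lemma as written would instead assert $\chi(N)=-\frac{1}{2\pi}\int_N K_G\,dA$. So your unverified claim that computing $c_0$ ``produces the constant $(-2\pi)^k$'' is wrong for odd $k$ in this convention.

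The constant $(-2\pi)^{-k}$ is the right normalisation for $\Omega:=d\omega-\omega\wedge\omega=-R$, for which indeed $\Omega_{ij}=-\omega_i\wedge\omega_j$ on the sphere. It is not the right one for the paper's $R$. Since $\operatorname{Pf}(-R)=(-1)^k\operatorname{Pf}(R)$, the lemma as literally stated is off by $(-1)^k$ and cannot be proved for odd $k$. Your sentence ``this calibrates the sign, and the result follows'' hides exactly this.

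Once corrected, your argument establishes $\chi(N)=(2\pi)^{-k}\int_N\operatorname{Pf}(R)$. This agrees with the lemma for even $k$, in particular for $k=2$, the only case the paper uses in \eqref{eq gauss-bonnet k s}. As a check there, $S=K=0$ gives $\int_{S^4}3\,\mathrm{vol}=8\pi^2=4\pi^2\chi(S^4)$. You should either restrict the statement to even $k$ or state it with the constant $(2\pi)^{-k}$ (equivalently, with $\operatorname{Pf}(-R)$ in place of $\operatorname{Pf}(R)$).
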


In our context ($n=4$), the localized curvature forms are given by $R_{ij} = (1 + \lambda_i \lambda_j) \omega_i \wedge \omega_j$. A direct algebraic evaluation of the Pfaffian yields
		\begin{equation*}
			\begin{aligned}
				\operatorname{Pf}(R) &= R_{12} \wedge R_{34} - R_{13} \wedge R_{24} + R_{14} \wedge R_{23} \\
				&= \left( 3 + H_2 + 3K \right) \operatorname{vol} = \left( 3 - \frac{1}{2}S + 3K \right) \operatorname{vol},
			\end{aligned}
		\end{equation*}
		where we have used the minimality condition $H = 0$. Consequently, Lemma~\ref{GBC} leads to the integral identity:
\begin{equation}\label{eq gauss-bonnet k s}
			\int_M \left( 3 - \frac{1}{2}S + 3K \right) \operatorname{vol} = 4\pi^2 \chi(M).
\end{equation}

In Proposition~\ref{prop euler-vanishing} (established in the subsequent subsection), we prove that the Euler characteristic vanishes, i.e., $\chi(M) = 0$, whenever $K > 0$ and $\Omega \neq \emptyset$. Since both $S$ and $K$ are constant on $M$, equation \eqref{eq gauss-bonnet k s} yields the crucial identity:
\begin{equation}\label{eq:S-6K-relation}
	S = 6K + 6.
\end{equation}
Substituting \eqref{eq:S-6K-relation} into the auxiliary inequalities of Proposition~\ref{propK>0}, we find
$$
6K - \frac{1}{2}S = 3(K - 1),
$$
and
$$
6K - \frac{1}{4}S^2 + \frac{5}{2}S = -3(3K + 2)(K - 1).
$$
Thus, the first inequality in \eqref{eq:inequalities} is satisfied for all $K \ge 1$, while the second is satisfied for all $0 < K \le 1$. These ranges cover the entire positive regime $K > 0$. Furthermore, from the above two identities, both $6K - \frac{1}{2}S = 0$ and $6K - \frac{1}{4}S^2 + \frac{5}{2}S = 0$ yield $K=1$, which is equivalent to $S=12$. Consequently, the scalar curvature
$$
R_{M} = 12 - S \equiv 0
$$
coincides with the conclusion in \cite{ty23}. % that all such minimal isoparametric hypersurfaces have non-negative scalar curvature. 
Indeed, since $\Omega \neq \emptyset$, the classification of isoparametric hypersurfaces implies that $M$ must be the isoparametric hypersurface with $g=4$ simple principal curvatures. This completes the proof of Theorem~\ref{thm:main}.

\hfill $\Box$

%-------------------------------------------------------------------------------------------

\subsection{Proof of $\chi(M)=0$.}\label{3.4}

In this subsection, we establish the vanishing of the Euler characteristic $\chi(M)$ for any closed minimal hypersurface $M^4 \subset \mathbb{S}^5$ with constant $S$ and constant $K > 0$, under the assumption that $\Omega \neq \emptyset$. This topological result completes the variational rigidity proof presented in Subsection~\ref{3.3}.

We begin by recalling the topological relationship between orientability, the first Stiefel-Whitney class, and the orientation double cover (we refer the reader to \cite{gtm82,cc, zhang01} for differential-topological details). Let $\pi: E \to M$ be a real vector bundle of rank $r$. The orientation bundle of $E$, denoted by $\widetilde{\pi}: \mathcal{O}(E) \to M$, is a canonical principal $\mathbb{Z}_2$-bundle over $M$ whose fiber $\mathcal{O}_p(E) := \widetilde{\pi}^{-1}(p)$ at each point $p \in M$ consists of the two possible orientations of the fiber $E_p$. The total space $\mathcal{O}(E)$ forms a $2$-sheeted covering space of $M$, and $\mathcal{O}(E)$ is trivial if and only if $E$ is orientable. 
In the language of characteristic classes, the obstruction to orientability is measured by the first Stiefel-Whitney class $w_1(E) \in H^1(M; \mathbb{Z}_2)$, which vanishes if and only if $E$ is orientable. By construction, the pullback bundle over the total space of its own orientation cover, $\widetilde{\pi}^* E \to \mathcal{O}(E)$, admits a canonical, continuous orientation section induced by the diagonal map, rendering $\widetilde{\pi}^* E$ globally orientable.

\medskip
\noindent\textbf{Reduction to the Oriented Case.}
If the manifold $M$ itself is non-orientable, we let $\pi_M: \widetilde{M} := \mathcal{O}(TM) \to M$ be its orientation double cover. Equipped with the pullback metric and the pullback minimal immersion, $\widetilde{M}$ is a closed, oriented minimal hypersurface in $\mathbb{S}^5$. Since the geometric quantities $S$, $K$, and $f$ on $\widetilde{M}$ are the pullbacks of those on $M$, the geometric constants $S$ and $K$ remain invariant, and the condition $\Omega \neq \emptyset$ is preserved. Because the Euler characteristic behaves multiplicatively under finite covering maps, we have $\chi(\widetilde{M}) = 2\chi(M)$. Consequently, showing $\chi(\widetilde{M}) = 0$ immediately implies $\chi(M) = 0$. Thus, without loss of generality, we assume $M$ is oriented throughout the remainder of the proof.

\medskip
Now, let $E \to M$ be an oriented real vector bundle of rank $2k$ equipped with a fiber metric $g^E$ and a compatible connection $\nabla^E$. In terms of a local orthonormal frame, the connection forms $\omega^E$ and curvature forms $R^E$ satisfy the structural equation 
\begin{equation}\label{eq str v.b.}
	d\omega^E = \omega^E \wedge \omega^E - R^E.
\end{equation}
The Euler form of $(E, \nabla^E)$ is defined by
\begin{equation}\label{euler form}
	e(E, \nabla^E) := \left(\frac{-1}{2\pi}\right)^k \operatorname{Pf}(R^E),
\end{equation}
which is a closed $2k$-form whose de Rham cohomology class $e(E) = [e(E, \nabla^E)] \in H^{2k}_{\mathrm{dR}}(M)$ is the Euler class of $E$, which is independent of the choice of $g^E$ and $\nabla^E$. In the special case where $\operatorname{rank} E = 2$, both the connection form and the curvature form are scalar-valued, reducing the relation \eqref{eq str v.b.} and the Euler form \eqref{euler form} to
\begin{equation*}
	R^E = -d\omega^E, \qquad e(E, \nabla^E) = -\frac{1}{2\pi} R^E = \frac{1}{2\pi} d\omega^E.
\end{equation*}

\begin{prop}\label{prop euler-vanishing}
	Let $M^4$ be a closed minimal hypersurface immersed in $\mathbb{S}^5$ with constant $S$ and constant $K > 0$. If $\Omega \neq \emptyset$, then $\chi(M) = 0$.
\end{prop}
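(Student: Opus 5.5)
The plan is to split $TM$ into the sum of two smooth rank-$2$ distributions, one carrying the two negative and one carrying the two positive principal curvatures, and to show that the Euler form of each can be made to vanish away from one of the two singular strata $X=f^{-1}(-a)$ and $Z=f^{-1}(a)$. Since $X\cap Z=\emptyset$, the product of the two Euler forms will then vanish identically. By the reduction above, we may assume $M$ is oriented.

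\textbf{Step 1 (global splitting).} From the root analysis in Lemma~\ref{lem lem3.3 figure}, at every point the equation $G(x)=\zeta/3$ has exactly two roots on each branch $x<0$ and $x>0$. Hence we always have
$$\lambda_1\le\lambda_2<0<\lambda_3\le\lambda_4 .$$
The double root occurs only at $x_2<0$ on $Z$ and only at $x_1>0$ on $X$. Because no eigenvalue ever crosses $0$, the spectral projection
$$P_-=\frac{1}{2\pi i}\oint_{\Gamma}(z-A)^{-1}\,dz ,$$
with $\Gamma$ a contour around the negative half-line avoiding $0$, is a smooth endomorphism field. Its image $D_-$ and the image $D_+$ of $\mathrm{id}-P_-$ are smooth rank-$2$ subbundles, and $TM=D_-\oplus D_+$ orthogonally.

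If $D_-$ is not orientable, pass to the orientation double cover of $D_-$. This multiplies $\chi$ by $2$ and preserves all hypotheses, so we may assume $D_-$ oriented; $D_+$ is then oriented as well, since $TM$ is. Equip $D_\pm$ with the projected Levi-Civita connections $\nabla^\pm$. By the Whitney product formula,
$$\chi(M)=\int_M e(D_-,\nabla^-)\wedge e(D_+,\nabla^+).$$

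\textbf{Step 2 (localizing the Euler forms).} On $U_-:=M\setminus Z$ the two negative principal curvatures are distinct, so the $\lambda_1$-eigenline gives a smooth line field in $D_-$. Locally choose a unit section $e_1$ and set $e_2=Je_1$, where $J$ is rotation by $\pi/2$ in the oriented plane $D_-$. Then
$$\omega^-_{12}=\langle\nabla e_1,e_2\rangle$$
is independent of the sign of $e_1$, because $e_1\mapsto-e_1$ forces $e_2\mapsto -e_2$. Hence $\omega^-_{12}$ is a global $1$-form on $U_-$ with $e(D_-,\nabla^-)=\tfrac{1}{2\pi}d\omega^-_{12}$ there. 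This handles the possible non-orientability of the line field; it also reflects that $e(L\oplus L)$ is torsion.

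Now pick a smooth $\rho_-$ equal to $1$ outside a small neighborhood $V_Z$ of $Z$ and $0$ near $Z$. For instance, take $\rho_-=\phi\circ f$ with $\phi$ vanishing near $a$; note that $f$ is continuous and $X,Z$ are compact and disjoint. Then
$$\tau_-:=e(D_-,\nabla^-)-\tfrac{1}{2\pi}d(\rho_-\omega^-_{12})$$
is cohomologous to $e(D_-,\nabla^-)$ and is supported in $V_Z$. Symmetrically, using the $\lambda_4$-eigenline on $M\setminus X$, we obtain $\tau_+$ supported in a neighborhood $V_X$ of $X$, cohomologous to $e(D_+,\nabla^+)$. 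We choose $V_X\cap V_Z=\emptyset$, say $V_Z=\{f>a-\delta\}$ and $V_X=\{f<-a+\delta\}$.

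\textbf{Step 3 (conclusion).} Since the wedge product of closed forms depends only on their cohomology classes up to exact forms,
$$\chi(M)=\int_M\tau_-\wedge\tau_+=0,$$
because $\tau_-\wedge\tau_+$ vanishes pointwise. Should $X$ or $Z$ be empty, the same argument simply gives a globally exact Euler form.

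\textbf{Main obstacle.} I expect the delicate step to be Step 1 together with the global bookkeeping in Step 2. One must check that $D_\pm$ are genuinely smooth across the strata where $g=3$; the spectral-projection argument handles this because the gap at $0$ is uniform, $K>0$ being constant. One must also check that the orientation and double-cover reductions, and the sign-independence of $\omega_{12}$, are compatible, so that the transgression forms are honestly global on $M\setminus Z$ and $M\setminus X$.
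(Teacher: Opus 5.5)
Your proposal is correct and follows essentially the same route as the paper. Your $D_-$ and $D_+$ are exactly the paper's $E_{12}$ and $E_{34}$. The paper likewise transgresses each Euler form using the globally defined $1$-form $\langle\nabla e_1,e_2\rangle$ (and its analogue for $e_3,e_4$), cut off in $f$, so that the two modified representatives vanish on sets covering $M$, after passing to an orientation double cover if needed.

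The differences are cosmetic. You get the splitting directly as a spectral projection, using that there are always two negative and two positive principal curvatures, rather than gluing $L_1\oplus L_2$ with $(L_3\oplus L_4)^\perp$. You also absorb the case $\operatorname{Im} f\subsetneq[-a,a]$ into the main argument, whereas the paper treats it separately via a line field and Poincar\'e--Hopf.
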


\begin{proof}
By the reduction argument above, we assume throughout that $M$ is oriented.

	First, we consider the case where $\operatorname{Im}f \subsetneq [-a, a]$.  In this case, at least one of the boundary values $\pm a$ is not achieved on $M$.

	Suppose first that $a \notin \operatorname{Im} f$. By the algebraic characterization in Lemma~\ref{lem lem3.3 figure}, the minimum principal curvature $\lambda_1$ must have multiplicity one globally on $M$. The corresponding eigenspaces then form a smooth real line bundle $L_1 \subset TM$. If $L_1$ is orientable, it admits a globally defined, nowhere-vanishing unit section. By the classical Poincaré-Hopf theorem, this implies $\chi(M) = 0$. If $L_1$ is non-orientable, let $\pi: \mathcal{O}(L_1) \to M$ be its orientation double cover. The pullback line bundle $\pi^* L_1$ is orientable, and thus carries a global unit section. Since $\pi^* L_1 \subset \pi^* TM \cong T(\mathcal{O}(L_1))$, this section yields a nowhere-vanishing vector field on the closed manifold $\mathcal{O}(L_1)$, forcing $\chi(\mathcal{O}(L_1)) = 0$. The covering formula $\chi(\mathcal{O}(L_1)) = 2\chi(M)$ then guarantees $\chi(M) = 0$. An identical argument holds if $-a \notin \operatorname{Im} f$ by considering the simple line bundle associated with $\lambda_4$.
	
	For the remainder of the proof, we assume $\operatorname{Im} f = [-a, a]$. We define an open cover of $M$ by
	$$
	\Omega_{12} := f^{-1}\left(-\infty, \frac{a}{2}\right), \qquad \Omega_{34} := f^{-1}\left(-\frac{a}{2}, +\infty\right).
	$$
	By Lemma~\ref{lem lem3.3 figure}, the eigenvalues $\lambda_1, \lambda_2$ are simple on $\Omega_{12}$, and $\lambda_3, \lambda_4$ are simple on $\Omega_{34}$. Thus, the corresponding eigenspaces define smooth line bundles $L_1, L_2$ over $\Omega_{12}$ and $L_3, L_4$ over $\Omega_{34}$. We define two global rank-$2$ subbundles $E_{12}, E_{34} \subset TM$ by
	\begin{equation*}
		E_{12} := \begin{cases}
			L_1 \oplus L_2 & \text{on } \Omega_{12}, \\
			(L_3 \oplus L_4)^\perp & \text{on } \Omega_{34},
		\end{cases} \qquad
		E_{34} := \begin{cases}
			(L_1 \oplus L_2)^\perp & \text{on } \Omega_{12}, \\
			L_3 \oplus L_4 & \text{on } \Omega_{34}.
		\end{cases}
	\end{equation*}
	This construction is smoothly well-defined on $M$. On the intersection $\Omega_{12} \cap \Omega_{34} \subset \Omega$, all four principal curvatures are simple, which matches the identity $(L_3 \oplus L_4)^\perp = L_1 \oplus L_2$. This yields the global topological Whitney sum splitting:
	\begin{equation}\label{eq:TM-splitting}
		TM = E_{12} \oplus E_{34}.
	\end{equation}

Since $M$ is oriented, its first Stiefel-Whitney class vanishes, i.e., $w_1(TM) = 0$. Applying the Whitney product formula to the splitting \eqref{eq:TM-splitting}, we obtain
$$
0 = w_1(TM) = w_1(E_{12}) + w_1(E_{34}).
$$
This algebraic identity implies $w_1(E_{12}) = w_1(E_{34}) \in H^1(M; \mathbb{Z}_2)$. In terms of double covers, this topologically guarantees that the respective orientation covers of $E_{12}$ and $E_{34}$ are isomorphic. We can therefore define a unique, common orientation double cover space for both subbundles, denoted by the projection map
\begin{equation}\label{eq:common-cover}
	\widetilde{\pi}: \widetilde{M} := \mathcal{O}(E_{12}) \cong \mathcal{O}(E_{34}) \longrightarrow M.
\end{equation}
In particular, $E_{12}$ is orientable if and only if $E_{34}$ is orientable.

We first analyze the case where $E_{12}$ and $E_{34}$ are orientable. The Whitney product formula for the Euler class yields
$$
e(TM) = e(E_{12}) \wedge e(E_{34}).
$$
We shall represent these Euler classes by differential forms with disjoint supports. Let $g^{E_{12}}$ be the induced metric on $E_{12}$ and $\nabla^{E_{12}}$ be a compatible connection. On the open set
$$
\widetilde{\Omega}_{12} := f^{-1}\left(-\infty, \frac{3a}{4}\right),
$$
the eigenvalues $\lambda_1, \lambda_2$ remain simple. We locally choose oriented unit principal fields $\{e_1, e_2\}$ spanning $L_1, L_2$, compatible with the orientation of $E_{12}$. Any other oriented principal frame must satisfy $(e_1, e_2) \mapsto (-e_1, -e_2)$, which implies that the connection $1$-form
$$
\omega^{E_{12}} := \langle \nabla^{E_{12}} e_1, e_2 \rangle_{g^{E_{12}}}
$$
is globally well-defined on $\widetilde{\Omega}_{12}$. On this domain, the curvature form satisfies $R^{E_{12}} = -d\omega^{E_{12}}$.

Now, choose a smooth cut-off function $\phi \in C^\infty(\mathbb{R}, [0, 1])$ such that 
$$
\phi(t) = 1 \quad \text{for } t < \frac{a}{2}, \qquad \phi(t) = 0 \quad \text{for } t > \frac{3a}{4}.
$$
Extending by zero, the form $(\phi \circ f)\omega^{E_{12}}$ is globally well-defined on $M$. We define the modified Euler representative $2$-form on $M$ by
$$
\mathcal{E}_{12} := -\frac{1}{2\pi} R^{E_{12}} - \frac{1}{2\pi} d\big((\phi \circ f)\omega^{E_{12}}\big).
$$
Since $\mathcal{E}_{12}$ differs from the standard Euler form by an exact form, it represents the Euler class $e(E_{12})$. Furthermore, on $\Omega_{12}$, we have $\phi \circ f = 1$, which yields
$$
\mathcal{E}_{12}\big|_{\Omega_{12}} = -\frac{1}{2\pi} R^{E_{12}} - \frac{1}{2\pi} d\omega^{E_{12}} = 0.
$$

An identical construction applies to the subbundle $E_{34}$. On the domain $\widetilde{\Omega}_{34} := f^{-1}(-\frac{3a}{4}, +\infty)$, the eigenvalues $\lambda_3, \lambda_4$ remain simple. Choosing a smooth cut-off function $\psi \in C^\infty(\mathbb{R}, [0, 1])$ with $\psi(t) = 1$ for $t > -\frac{a}{2}$ and $\psi(t) = 0$ for $t < -\frac{3a}{4}$, we obtain a closed representative $2$-form $\mathcal{E}_{34} \in e(E_{34})$ satisfying
$$
\mathcal{E}_{34}\big|_{\Omega_{34}} = 0.
$$
Since $\Omega_{12} \cup \Omega_{34} = M$, the pointwise wedge product of these representatives vanishes identically on $M$:
$$
\mathcal{E}_{12} \wedge \mathcal{E}_{34} \equiv 0.
$$
Thus, the Euler class of $TM$ vanishes:
$$
e(TM) = [\mathcal{E}_{12} \wedge \mathcal{E}_{34}] = 0 \in H^4_{\mathrm{dR}}(M),
$$
which immediately yields $\chi(M) = 0$.

It remains to address the case where the subbundles $E_{12}$ and $E_{34}$ are non-orientable. Using the projection map $\widetilde{\pi}: \widetilde{M} \to M$ of the common orientation double cover defined in \eqref{eq:common-cover}, the pulled-back bundles $\widetilde{\pi}^* E_{12}$ and $\widetilde{\pi}^* E_{34}$ are orientable over the oriented closed manifold $\widetilde{M}$. Via the differential of the covering map, we obtain the smooth splitting
\begin{align*}
	T\widetilde{M} &= \widetilde{\pi}^* E_{12} \oplus \widetilde{\pi}^* E_{34}, \\
	\widetilde{\pi}^* E_{12} &= \widetilde{\pi}^* L_1 \oplus \widetilde{\pi}^* L_2 \quad \text{on } \widetilde{\pi}^{-1}(\Omega_{12}), \\
	\widetilde{\pi}^* E_{34} &= \widetilde{\pi}^* L_3 \oplus \widetilde{\pi}^* L_4 \quad \text{on } \widetilde{\pi}^{-1}(\Omega_{34}).
\end{align*}
Applying the argument for the orientable case to the oriented closed manifold $\widetilde{M}$ yields $\chi(\widetilde{M}) = 0$. Using the double-cover relation $\chi(\widetilde{M}) = 2\chi(M)$, we conclude $\chi(M) = 0$.

\end{proof}

\begin{rem}
	In settings where a principal curvature, say $\lambda_1$, is simple everywhere on $M$, the corresponding eigenspace associated with $\lambda_1$ defines a global real line bundle $L_1 \subset TM$. This line bundle $L_1$ admits a global, nowhere-vanishing section if and only if it is orientable. If $L_1$ is non-orientable, one recovers the vanishing of the Euler characteristic by passing to its orientation double cover $\mathcal{O}(L_1)$. This clarifies a key orientability subtlety in the argument of \cite{cui25}.
\end{rem}

%-------------------------------------------------------------------------------------------

\vspace{3mm}

%------------------------------------------------------------------------------------
\subsection{Proof of Assertion \ref{asser}}\label{3.5}

	We first make precise the dependence of the principal curvatures on $f$. On
	$\Omega=f^{-1}(-a,a)$, the principal curvatures are the four ordered real
	roots of
	$$
	x^4-\frac S2x^2-\frac f3x+K=0 .
	$$
	Since $S$ and $K$ are fixed and the four roots are distinct for
	$-a<f<a$, each $\lambda_i$ is a smooth function of the single variable $f$ on
	$(-a,a)$. Moreover the ordered roots extend continuously to the endpoints.
	Thus, as $f\to a^-$ and $f\to -a^+$,
	$$
	(\lambda_1,\lambda_2,\lambda_3,\lambda_4)
	\longrightarrow
	(\beta_1,\beta_2,\beta_3,\beta_4),
	$$
	and
	$$
	(\lambda_1,\lambda_2,\lambda_3,\lambda_4)
	\longrightarrow
	(\alpha_1,\alpha_2,\alpha_3,\alpha_4),
	$$
	respectively, where
	$$
	\alpha_1<\alpha_2<0<\alpha_3=\alpha_4,
	\qquad
	\beta_1=\beta_2<0<\beta_3<\beta_4 .
	$$
	Consequently the coefficients $u_i,v_i$, being rational functions of the
	$\lambda_j$ on $\Omega$, are functions of $f$ alone.
	
	We prove the lower bound for $u_i,v_i$ on $Z_\epsilon$; the upper bound on
	$X_\epsilon$ is obtained by the same argument at the other endpoint
	$f=-a$, where the coalescing pair is $\lambda_3,\lambda_4$.
	
	Near $f=a$, the only possible singularity is caused by
	$\lambda_1-\lambda_2\to0$. From the formula for $df\wedge\Psi_1$, after using
	(3.24), one obtains
	$$
	\begin{aligned}
		u_1={}&
		-\frac{K}{3\lambda_1(\lambda_2-\lambda_1)^2(\lambda_4-\lambda_2)(\lambda_3-\lambda_2)}
		+\frac{K}{3\lambda_1(\lambda_3-\lambda_1)^2(\lambda_4-\lambda_3)(\lambda_3-\lambda_2)}\\
		&-\frac{K}{3\lambda_1(\lambda_4-\lambda_1)^2(\lambda_2-\lambda_4)(\lambda_3-\lambda_4)},
	\end{aligned}
	$$
	and
	$$
	\begin{aligned}
		u_2={}&
		-\frac{K}{3\lambda_2(\lambda_2-\lambda_1)^2(\lambda_3-\lambda_1)(\lambda_4-\lambda_1)}
		+\frac{K}{3\lambda_2(\lambda_3-\lambda_2)^2(\lambda_4-\lambda_3)(\lambda_3-\lambda_1)}\\
		&-\frac{K}{3\lambda_2(\lambda_4-\lambda_2)^2(\lambda_3-\lambda_4)(\lambda_4-\lambda_1)} .
	\end{aligned}
	$$
	Since $\beta_1=\beta_2<0<\beta_3<\beta_4$, the first terms in $u_1$ and
	$u_2$ tend to $+\infty$, while the remaining terms have finite limits. Hence
	$$
	u_1\to+\infty,\qquad u_2\to+\infty
	\qquad\text{as } f\to a^- .
	$$
	For $u_3$ and $u_4$, we have
	$$
	\begin{aligned}
		u_3={}&
		-\frac{K}{3\lambda_3(\lambda_3-\lambda_1)^2(\lambda_2-\lambda_1)(\lambda_4-\lambda_1)}
		+\frac{K}{3\lambda_3(\lambda_3-\lambda_2)^2(\lambda_4-\lambda_2)(\lambda_2-\lambda_1)}\\
		&-\frac{K}{3\lambda_3(\lambda_4-\lambda_3)^2(\lambda_1-\lambda_4)(\lambda_2-\lambda_4)},
	\end{aligned}
	$$
	and
	$$
	\begin{aligned}
		u_4={}&
		-\frac{K}{3\lambda_4(\lambda_4-\lambda_1)^2(\lambda_3-\lambda_1)(\lambda_2-\lambda_1)}
		+\frac{K}{3\lambda_4(\lambda_4-\lambda_2)^2(\lambda_3-\lambda_2)(\lambda_2-\lambda_1)}\\
		&-\frac{K}{3\lambda_4(\lambda_4-\lambda_3)^2(\lambda_1-\lambda_3)(\lambda_2-\lambda_3)} .
	\end{aligned}
	$$
	For $u_3$, write the three summands above as $u_{31},u_{32},u_{34}$. The term
	$u_{34}$ has a finite limit as $f\to a^-$.
	The potentially singular sum can be written as the difference quotient
	$$
	u_{31}+u_{32}
	=
	\frac{K}{3\lambda_3(\lambda_2-\lambda_1)}
	\left[
	\frac{1}{(\lambda_3-\lambda_2)^2(\lambda_4-\lambda_2)}
	-
	\frac{1}{(\lambda_3-\lambda_1)^2(\lambda_4-\lambda_1)}
	\right].
	$$
	Since $\lambda_3,\lambda_4$ stay away from $\lambda_1,\lambda_2$ near
	$f=a$, the function
	$$
	(t,z,w)\longmapsto
	\frac{1}{(z-t)^2(w-t)} ,
	$$
	with $z=\lambda_3$ and $w=\lambda_4$, is smooth in a neighborhood of
	$(\beta_1,\beta_3,\beta_4)$. Hence the above difference quotient is bounded,
	and in fact has a finite limit as $f\to a^-$. Thus $u_3$ has a finite limit
	as $f\to a^-$. The same argument gives a finite limit for $u_4$.

	The coefficients $v_i$ are treated in the same way. The formulas for $v_1$ and
	$v_2$ show that their singular first terms tend to $+\infty$, and hence
	$$
	v_1\to+\infty,\qquad v_2\to+\infty
	\qquad\text{as } f\to a^- .
	$$
	For $v_3$, denote by $v_{31},v_{32},v_{34}$ the three terms displayed in the
	following expression:
	$$
	\begin{aligned}
		v_3={}&
		\frac{-\lambda_1(\lambda_1^2+\lambda_1\lambda_3+\lambda_3^2)}
		{3(\lambda_3-\lambda_1)^2(\lambda_2-\lambda_1)(\lambda_4-\lambda_1)}-\frac{\lambda_2(\lambda_2^2+\lambda_2\lambda_3+\lambda_3^2)}
		{3(\lambda_3-\lambda_2)^2(\lambda_2-\lambda_1)(\lambda_4-\lambda_2)}\\
		&+\frac{\lambda_4(\lambda_3^2+\lambda_3\lambda_4+\lambda_4^2)}
		{3(\lambda_4-\lambda_3)^2(\lambda_4-\lambda_1)(\lambda_4-\lambda_2)} .
	\end{aligned}
	$$
	The term $v_{34}$ is finite, while
	$$
	\begin{aligned}
		v_{31}+v_{32}
		={}&
		\frac{1}{3(\lambda_2-\lambda_1)}
		\left[
		\frac{\lambda_2(\lambda_2^2+\lambda_2\lambda_3+\lambda_3^2)}
		{(\lambda_3-\lambda_2)^2(\lambda_4-\lambda_2)}
		-
		\frac{\lambda_1(\lambda_1^2+\lambda_1\lambda_3+\lambda_3^2)}
		{(\lambda_3-\lambda_1)^2(\lambda_4-\lambda_1)}
		\right].
	\end{aligned}
	$$
	This is again a difference quotient of the smooth function
	$$
	F(t,z,w):=\frac{t(t^2+tz+z^2)}{(z-t)^2(w-t)},
	$$
	which is well-defined and smooth in a neighborhood of
	$(\beta_1,\beta_3,\beta_4)$. Indeed, we have
	$$
	v_{31}+v_{32}
	=
	\frac{1}{3}
	\frac{F(\lambda_2,\lambda_3,\lambda_4)
		-F(\lambda_1,\lambda_3,\lambda_4)}
	{\lambda_2-\lambda_1}.
	$$
	Since
	$$
	(\lambda_1,\lambda_2,\lambda_3,\lambda_4)
	\to
	(\beta_1,\beta_1,\beta_3,\beta_4)
	\quad\text{as } f\to a^-,
	$$
	it follows that
	$$
	v_{31}+v_{32}
	\to
	\frac{1}{3}\partial_tF(\beta_1,\beta_3,\beta_4).
	$$
	Thus $v_{31}+v_{32}$ remains finite as $f\to a^-$. Since $v_{34}$ also has a
	finite limit, $v_3$ is finite at the endpoint. The proof for $v_4$ is
	analogous.

	It follows that each $u_i$ and $v_i$ has either a finite limit
	or the limit $+\infty$ as $f\to a^-$. Hence there exist $\delta>0$ and $\mathcal{C}_1>0$, depending
	only on $S$ and $K$, such that
	$$
	u_i\ge -\mathcal{C}_1,\qquad v_i\ge -\mathcal{C}_1
	\qquad\text{for } a-\delta<f<a .
	$$
	On the compact interval $[0,a-\delta]$, the functions $u_i,v_i$ are continuous
	and therefore bounded from below. Enlarging $\mathcal{C}_1$ if necessary, we obtain
	$$
	u_i\ge -\mathcal{C}_1,\qquad v_i\ge -\mathcal{C}_1
	\qquad\text{for } 0<f<a .
	$$
	Since $Z_\epsilon\subset\{0<f<a\}$, this gives the desired lower bound on
	$Z_\epsilon$.
	
	Repeating the same argument near $f=-a$, where
	$\lambda_3-\lambda_4\to0$, shows that each $u_i$ and $v_i$ has either a finite
	limit or the limit $-\infty$ as $f\to -a^+$. Therefore there exists
	$\mathcal{C}_2>0$, depending only on $S$ and $K$, such that
	$$
	u_i\le \mathcal{C}_2,\qquad v_i\le \mathcal{C}_2
	\quad\text{for } -a<f<0 .
	$$
	Since $X_\epsilon\subset\{-a<f<0\}$, the required upper bound on
	$X_\epsilon$ follows. 
	
	Taking $\mathcal{C}=\max\{\mathcal{C}_1,\mathcal{C}_2\}$ proves the Assertion \ref{asser}.

\hfill$\Box$

\begin{ack}
The authors would like to sincerely thank Professor Zizhou Tang for his helpful comments on the Euler characteristic.
\end{ack}

%------------------------------------------------------------------------------------

\end{document}